\documentclass[12pt,twoside,a4paper,reqno]{amspaper}
\usepackage{mathdef}
\usepackage{natbib}
\bibpunct{(}{)}{,}{a}{}{,}
\ifx\bibfont\newcommand{\bibfont}{\small}
\else\renewcommand{\bibfont}{\small}\fi

\usepackage{pst-all}
\usepackage{comment}
\usepackage{url}
\newcommand{\TN}{T}
\newcommand{\Th}{T_h}
\newcommand{\Thk}{T_{h,k}}

\newcommand{\g}{\phi}
\newcommand{\G}{\Phi}

\newcommand{\ghk}{\g^{h,k}}

\newcommand{\hatghk}{\hat\g^{h,k}}
\newcommand{\nuh}{\nu^h}
\newcommand{\hatnuh}{\hat\nu^{h,k}}

\newcommand{\Y}{Y^h}

\newcommand{\Yk}{Y^{h,k}}
\newcommand{\tildeY}{\tilde Y^h}
\newcommand{\tildeYk}{\tilde Y^{h,k}}

\newcommand{\barY}{\bar Y^h}
\newcommand{\barYk}{\bar Y^{h,k}}
\newcommand{\F}{\fclass}
\newcommand{\Gh}{\Gamma^h}

\newcommand{\Gk}{\Gamma_{h,k}}
\newcommand{\gk}{\gamma_{h,k}}
\newcommand{\hatGk}{\hat\Gamma_{h,k}}
\newcommand{\hatgk}{\hat\gamma_{h,k}}
\newcommand{\tildeGk}{\tilde\Gamma_{h,k}}
\newcommand{\tildegk}{\tilde\gamma_{h,k}}

\newcommand{\note}[1]
%
{$^{(!)}$\marginpar[{\hfill\tiny{\sf{#1}}}]{\tiny{\sf{(!) #1}}}}

\begin{document}
\title[Graphical Modeling for Multivariate Hawkes Processes]{Graphical Modeling for Multivariate Hawkes Processes with Nonparametric Link Functions}
\author{Michael Eichler, Rainer Dahlhaus, and Johannes Dueck}
\affil{Maastricht University and University of Heidelberg}
\dedicatory{{\upshape\today}}
\begin{abstract}
\citet{hawkes-71} introduced a powerful multivariate point process model of mutually exciting processes to explain causal structure in data. In this paper it is shown that the Granger causality structure of such processes is fully encoded in the corresponding link functions of the model. A new nonparametric
estimator of the link functions based on a time-discretized version of the point process is introduced by using an infinite order autoregression. Consistency of the new estimator is derived. The estimator is applied to simulated data and to neural spike train data from the spinal dorsal horn of a rat.\\[1em]
\noindent
{{\itshape Keywords:} Hawkes process, Granger causality, graphical model, mutually exciting process, nonparametric estimation}
\smallskip

\noindent
{{\em 2010 Mathematics Subject Classification}. Primary: 60G55;
Secondary: 62M10.}
\end{abstract}
\maketitle

\section{Introduction}

In two seminal papers, \citet{hawkes-71,hawkes-71b} Hawkes introduced a multivariate model for point processes with mutually exciting components now referred to as the Hawkes model. In the beginning it was motivated by modeling aftershocks and seismological phenomena \citep[cf.][]{vere-70,vere-82,ogata-99}. It also served as a first model for neuron firing and stimulated the introduction of more complex nonlinear models to include inhibitory couplings and the refractory period \citep{okatan-05,cardanobile-10}. The usage of the Hawkes model has been more and more spread out to different research areas: \citet{brantingham-11} examines insurgency in Iraq, \citet{mohler-11} use it for modeling crime, \citet{reynaud-10} apply it to genome analysis, and \citet{carstensen-2010} model the occurrence of regulatory elements. Recently, the Hawkes model has become popular in particular in finance for modeling price fluctuations or transactions, cf. \citet{bacry-11}, \citet{bacry-12} and \citet{embrechts-11}.

In this paper we put the focus on the causal structure of the Hawkes model by applying causality concepts to mutually exciting point processes. \citet{granger-69} defined the notion of Granger causality. It reflects the belief that a cause should always occur before the effect and that the prediction of a process with the knowledge of a possible cause should improve if there is a causal relation present. However, temporal precedence alone is not a sufficient condition for establishing cause-effect relationships, and it is commonly accepted that empirical evaluation of Granger causality can lead to false detection of causal links. Nevertheless, the concept of Granger causality together with suitable graphical representations remains a useful tool for causal learning as has been shown in \citet{eichlerCSPA2012,eichlerPTRSA2013}.
The first objective of this paper is to set the framework for such causal learning approaches by defining the necessary graphical concepts. In particular, we establish a global Markov property, which relates the Granger causalities observed for part of the variables to the causal structure of the full system. Such global Markov properties play a key role in the graphical approach to causal learning.

The original definition of Granger applies only to processes in discrete time. Extensions to continuous time processes have been developed in the general framework of continuous-time semimartingales by \citet{florens-96} and for mean square continuous processes by \citet{comterenault96}. The notion of Granger causality in continuous time is closely related to the definition of local independence for composable Markov processes \citep{schweder70} and marked point processes \citep{Didelez-08}.

For multivariate point processes, Granger causal dependences can be described in terms of the conditional intensity $\lam(t)$, which in the case of a Hawkes process is of the form
\begin{align*}
\lambda(t)=\nu+\int_{0}^t \g(u)\,dN(t-u),
\end{align*}
where $\nu$ is a vector of positive constants (often referred to as background or Poisson rates) and $\g(\cdot)$ is a matrix of nonnegative link functions (also called Hawkes kernels) that vanish on the negative half axis. With this linear dependence structure, Hawkes processes may be viewed as a point process analogue to classical autoregressions in time series analysis. Whenever a component process produces an event, this increases the conditional firing rates of the other processes specified by the corresponding Hawkes link function $\phi(\cdot)$. Hence, the link functions encode a causal structure. The problem is to estimate these link functions. The most popular approach is a maximum likelihood approach as in \citet{ozaki-79}, where $\g(\cdot)$ is assumed to be of parametric form, e.g.~consisting of exponential functions or Laguerre polynomials. In recent literature, other estimation procedures have been proposed: \citet{bacry-12} use a numerical method for nonparametric estimation based on martingale and Laplace transform techniques, while the model-independent stochastic declustering (MISD) algortihm of \citet{marson2008,marson2010} employs the EM-algorithm to recover a piecewise constant approximation of the link function; the latter approach has been generalized by citet{lewis-11} to smooth function classes by using penalized maximization. As these methods involve complex computations, their use for causal learning algorithms, which require fitting of a large number of models, is limited.

In this paper, we present a simple and fast alternative to the existing nonparametric estimation method for the Hawkes link functions: we propose to discretize the point process by considering the the increments over equidistant time points and then to fit a vector autoregressive model by least squares. Part of the derivation will be along the ideas in \citet{lewis-85}. The paper is organized as follows: in Section \ref{section-hawkes} we define the Hawkes model and summarize some basic properties. Section \ref{section-granger} contains a discussion of the causality structure with respect to Granger causality and from the point of view of graphical models. In Section \ref{section-estimation} we introduce the nonparametric estimator and prove consistency when the observation interval tends to infinity and the discretization step size tends to zero. As an illustration, Section \ref{section-application} contains an application to EEG data from the spinal dorsal horn of a rat. Section \ref{section-conclusion} contains some concluding remarks. Part of the proofs have been put into the appendix. 

\section{Multivariate Hawkes processes}
\label{section-hawkes}

We consider multivariate point processes $N=(N_1,\ldots,N_d)'$ on a probability space $(\Omega,\fclass,\prob)$, that is, the components $N_i$, $i=1,\dots,d$ are random counting measures on $\rnum$. For simplicity, we will write $N_i(t)=N_i([0,t])$ for the number of events of the $i$--th component process up to time $t$. Throughout the paper, we make the following basic assumption.

\begin{assumption}
\label{assumppointproc}
The point process $N=(N_1,\ldots,N_d)'$ is stationary,
\[
N_1(A_1),\ldots,N_d(A_d)\sim N_1(t+A_1),\ldots,N_d(t+A_d)
\]
for all measurable sets $A_1,\ldots,A_d\subseteq\rnum$, where
$t+A=\{t+a\in\rnum|a\in A\}$. Furthermore, $N$ is a simple point process, that is, the counting processes $N_j$, $1\leq j\leq d$, have almost surely step size 1 and do not jump simultaneously.
\end{assumption}

Let $\fclass=(\fclass(t))_{t\in\rnum}$ be the filtration generated by the process $N$. Then $N$ is a special semimartingale with respect to $\fclass$, that is, there exists a predictable process $\Lambda=\big(\Lambda(t)\big)_{t\geq 0}$ with $\Lambda(0)=0$ such that the process $M=\big(M(t)\big)_{t\geq 0}$ given by the decomposition
\begin{equation}
\label{decomposition}
N(t)=M(t)+\Lambda(t)
\end{equation}
is a martingale with respect to $\fclass$. The process $\Lambda$ is called the compensator of $N$. The decomposition \eqref{decomposition} is important for the definition of Granger non--causality in section \ref{section-granger}.

Of particular interest is the case where the compensator process $\Lambda$ has components $\Lambda_i=\big(\Lambda_i(t)\big)_{t\geq 0}$ that are absolutely continuous with respect to Lebesgue measure.
\begin{definition}
Let $\lambda=\big(\lambda(t)\big)_{t\geq 0}$ be a $\fclass$-predictable process in $\rnum_+^d$ such that
\[
\int_0^\infty \lam_i(t)\,dt<\infty
\]
for every $i=1,\ldots,d$ and
\[
\mean\bigg(\int_0^\infty X(t)\,\lam(t)\,dt\bigg)
=\mean\bigg(\int_0^\infty X(t)\,dN(t)\bigg)
\]
for all non-negative predictable processes $X=(X(t))_{t\geq 0}$. Then the process $\lambda$ is the conditional intensity of the point process $N$.
\end{definition}
The conditional intensity $\lam(t)$ thus describes the intensity by which new events are generated conditionally on the past history of the process $N$,
\[
\lam(t)=\lim_{h\to 0}\tfrac{1}{h}\,\prob\big(N([t,t+h))\geq 1|\fclass(t-)\big),
\]
where $\fclass(t-)$ is the $\sigma$--algebra of all events up to but excluding time $t$.

In this paper, we are considering point processes for which the conditional intensity of each component depends linearly on the past events.
\begin{definition}
\label{hawkes-def}
A multivariate Hawkes process is a stationary and simple multivariate point process $N=(N_1,\ldots,N_d)'$ such that $N_i$ has conditional intensity
\begin{equation}
\label{Hawkes-intensity}
\lam_i(t)=\nu_i+\lsum_{j=1}^{d}\int_{0}^{t}\g_{ij}(u)\,dN_j(t-u),
\end{equation}
where $\nu_i>0$ is the baseline intensity of the $i$--th component and the link functions $\g_{ij}$ satisfy $\g_{ij}(u)=0$ for $u\leq 0$ and
\begin{equation}
\label{integrability}
\int_{0}^{\infty}\norm{\g_{ij}(u)}\,du<1.
\end{equation}
\end{definition}
The integrability condition \eqref{integrability} ensures the existence and uniqueness of a stationary point process with conditional intensities given by \eqref{Hawkes-intensity} \citep[e.g.][]{hawkesoakes1974}. Furthermore, the condition $\g_{ij}(0)=0$ ensures that the conditional intensity $\lam(t)$ is left-continuous and hence is predictable as required by the decomposition in \ref{decomposition}.

For a stationary Hawkes process, the mean intensity $p_N=\mean(N(1))$ is related to the link function $\g$ and baseline intensity $\nu$ by
\begin{equation}
p_N=\Big(I_d-\int_{0}^\infty \g(u)\,du\Big)^{-1}\,\nu
\label{intensity}
\end{equation}
where $I_d$ denotes the $d\times d$ identity matrix. The covariance structure of a stationary simple point process $N$ is given by
\[
\cov\big(N(A),N(B)\big)=\int_A\int_B q_{NN}(t-s)\,dt\,ds+\int_{A\cap B}p_N\,dt,
\]
were $q_{NN}$ is the covariance density of $N$. Here, the second integral is due to the fact that for a simple point process we have $\mean(dN(t)^2)=\mean(dN(t))$. For Hawkes processes, explicit expressions for the covariance density are available, for instance, in the case of exponentially decaying link functions \citep{hawkes-71}; \citet{bacry-12} provides a detailed analysis of the covariance structure in the general case.

Similarly, for integrable functions $f_1,\ldots,f_k$,
we have for the cumulants of higher order
\begin{align}
\label{cumulants}
\int_{\rnum^k}\lprod_{i=1}^k &f_i(t_i)\,
\cum\big(dN_{i_1}(t_1),\ldots,dN_{i_k}(t_k)\big)\notag\\
&=\sum_{P_1,\ldots,P_m}\int_{\rnum^m}
\prod_{j\in P_1}f_j(\tau_1)\,\cdots
\prod_{j\in P_m}f_j(\tau_m)\,q_{p_{11},\ldots,p_{1m}}(\tau_1,\ldots,\tau_m)
\,d\tau_1\cdots d\tau_m,
\end{align}
where the first sum extends over all partitions $\{P_1,\ldots,P_m\}$ of $\{1,\ldots,k\}$ with $m=1,\ldots,k$. Furthermore, $q_{i_1,\ldots,i_m}$ denotes the cumulant density of $N_{i_1},\ldots,N_{i_m}$ and the first sum extends over all partitions $\{P_1,\ldots,P_m\}$, $m=1,\ldots,k$, of $\{1,\ldots,k\}$. Explicit expressions for the cumulants of a Hawkes Process can be found in \citet{jovanovic2015}.

\section{Graphical modelling of multivariate Hawkes processes}
\label{section-granger}

Let $N=(N_1,\ldots,N_d)'$ be a stationary $d$-dimensional point process with canonical filtration $\fclass=\big(\fclass(t)\big)_{t\in\rnum}$. For any $A\subseteq V=\{1,\ldots,d\}$ let $\fclass_A(t)$ be the sub-$\sigma$-algebra that corresponds to the sub-process $N_A=(N_a)_{a\in A}$. Following the general definition of \citet{florens-96}, we obtain the following definition of Granger non-causality. We note that \citet{florens-96} use the term ``instantaneous Granger non-causality'' to distinguish it from non-causality over longer time horizons.

\begin{definition}[Granger non-causality]
Let $N$ be a stationary multivariate point process with canonical filtration $\fclass$. Then the $i$-th component $N_i$ does not Granger-cause the $j$-th component $N_j$ with respect to $\fclass$ if the compensator $\Lambda_j(t)$---or equivalently the conditional intensity function $\lambda_j(t)$---is $\fclass_{-i}(t-)$-measurable for all $t\in\rnum$.
\end{definition}

The above definition could be generalized by including also additional exogenous variables $X$ into the filtration. For our discussion of multivariate Hawkes processes as given by \ref{hawkes-def} it is sufficient to consider canonical filtrations generated by the point processes $N$.

For the $i$-th component $N_i$ of a Hawkes process, the conditional intensity $\lambda_i(t)$ is $\fclass_{-j}(t-)$-measurable if and only if the $j$-th link function $g_{ij}$ is identical to zero. Thus we have the straightforward result.

\begin{proposition}
Let $N$ be a multivariate Hawkes process with intensities as in \eqref{Hawkes-intensity}. Then $N_{i}$ does not Granger-cause $N_{j}$ with respect to $N$ if and only if $\g_{ji}(u)=0$ for all $u\in\rnum$.
\end{proposition}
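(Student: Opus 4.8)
The plan is to read everything off the linear form \eqref{Hawkes-intensity} and isolate the one term of $\lambda_j(t)$ that involves $N_i$. Write $\fclass_{-i}$ for the canonical filtration of the subprocess $N_{-i}=(N_k)_{k\ne i}$ and split $\lambda_j(t)=A_j(t)+B_{ji}(t)$, where $A_j(t)=\nu_j+\sum_{k\ne i}\int_0^t\g_{jk}(u)\,dN_k(t-u)$ gathers all terms not involving $N_i$ and $B_{ji}(t)=\int_0^t\g_{ji}(u)\,dN_i(t-u)\ge 0$. Because every link function vanishes on $(-\infty,0]$, the process $A_j$ is left-continuous and $\fclass_{-i}$-adapted, hence $\fclass_{-i}$-predictable, so $A_j(t)$ is $\fclass_{-i}(t-)$-measurable for every $t$.

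The ``if'' direction is then immediate: if $\g_{ji}\equiv 0$ then $B_{ji}\equiv 0$ and $\lambda_j=A_j$ is $\fclass_{-i}(t-)$-measurable for every $t$, which by definition --- using the equivalence, recalled in the text, of the conditions on $\Lambda_j$ and on $\lambda_j$ --- means that $N_i$ does not Granger-cause $N_j$. For the ``only if'' direction I would start from the hypothesis that $\lambda_j(t)$ is $\fclass_{-i}(t-)$-measurable for all $t$ and subtract the already measurable summand $A_j(t)$; this shows that $B_{ji}(t)$ is $\fclass_{-i}(t-)$-measurable, i.e.\ $B_{ji}(t)=\mean\bigl(B_{ji}(t)\mid\fclass_{-i}(t-)\bigr)$ a.s.\ for every $t$. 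Since $B_{ji}(t)\ge 0$ and $\mean B_{ji}(t)=(p_N)_i\int_0^t\g_{ji}(u)\,du$ by stationarity, it is enough to deduce that $B_{ji}(t)\equiv 0$: this forces $\int_0^t\g_{ji}=0$ for all $t>0$ and hence $\g_{ji}=0$ Lebesgue-a.e., which in \eqref{Hawkes-intensity} is the same as $\g_{ji}\equiv 0$.

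To obtain $B_{ji}\equiv 0$ I would argue by contradiction. If $\g_{ji}$ were not a.e.\ zero then, since $\g_{ji}\ge 0$, there would be $t>0$, $\epsilon>0$ and a Borel set $U\subseteq(0,t)$ of positive Lebesgue measure with $\g_{ji}\ge\epsilon$ on $U$. On the event that $N_i$ has an atom in the set $t-U=\{t-u:u\in U\}$ one has $B_{ji}(t)\ge\epsilon$, whereas on the event that $N_i$ has no atom in $(0,t)$ one has $B_{ji}(t)=0$; these two disjoint events both have strictly positive conditional probability given $\fclass_{-i}(t-)$, so $B_{ji}(t)$ cannot a.s.\ equal its $\fclass_{-i}(t-)$-conditional expectation --- contradicting the previous paragraph. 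I expect most of this to be regarded as evident from the explicit form \eqref{Hawkes-intensity}; the one point needing care, and the main obstacle, is exactly the last claim --- that conditioning on the whole past of $N_{-i}$ does not trivialize the law of $N_i$ on a bounded interval. This is where the Hawkes structure enters: the baseline rate $\nu_i>0$ bounds the $\fclass$-conditional intensity of $N_i$ from below, while the compensator is absolutely continuous, so $N_i$ never jumps at an $\fclass_{-i}$-predictable time, and a comparison with homogeneous Poisson processes (equivalently, the Poisson embedding of the Hawkes process) turns both conditional-probability statements into honest strict inequalities. Everything else is bookkeeping with \eqref{Hawkes-intensity}, stationarity and the definitions.
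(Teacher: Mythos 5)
Your proposal is correct and follows essentially the same route as the paper, which simply reads the equivalence off the linear form \eqref{Hawkes-intensity}: the paper states, without a written proof, that $\lambda_j(t)$ is $\fclass_{-i}(t-)$-measurable if and only if $\g_{ji}$ vanishes identically, and calls the proposition a straightforward consequence. Your only addition is to make the ``only if'' direction explicit via the nondegeneracy of the conditional law of $N_i$ given $\fclass_{-i}(t-)$ (using $\nu_i>0$ and a Poisson comparison), which is exactly the detail the paper leaves implicit and is argued correctly.
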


The martingale property of the process $M$ in \eqref{decomposition} with respect to the filtration $\fclass$, that is, $\mean\big(dM(t)\given\fclass(t)\big)=0$, suggests that the above definition of Granger causality considers only dependence in the mean and hence corresponds to what is known as Granger causality in the mean in the context of time series. However, we note that for simple point processes the conditional intensities determine the full conditional distribution. This rules out any higher-order dependences of $M(t)$ on the past. Additionally, the increments of the components of $M(t)$ are mutually independent since simultaneous occurrence of events is almost surely not possible for a simple process. Consequently, the above definition in fact describes a notion of strong Granger (non-)causality formulated in terms of conditional independence.

\citet{schweder70} introduced the concept of local independence to describe dynamic dependences in time-continuous Markov processes. We note that the above notion of Granger noncausality and that of local independence are equivalent in the present context of point processes.

With the above definition of Granger noncausality, the definition of Granger causality graphs in \citet{eichlerpathdiagr,eichlergraphmodel} directly extends to the present case of multivariate Hawkes processes.

\begin{definition}
For a multivariate Hawkes process $N=(N_1,\ldots,N_d)'$, the Granger causality graph of $N$ is given by a graph $G$ with vertices $V=\{1,\ldots,d\}$ and directed edges $i\DE j$ with $i,j\in V$ satisfying
\[
i\DE j\notin G \iff \g_{ji}(u)=0\text{ for all }u\in\rnum.
\]
\end{definition}

The use of Granger causality graphs goes beyond simple visualization of dynamic dependences. The key feature of the graphical approach is that it relates Granger noncausality to pathwise separation in the graph by so-called global Markov properties. This allows to derive Granger noncausality relations for arbitrary subprocesses from the graph. More importantly, the graphical approach yields also criteria for identifying the Granger causal structure of a system which is only partially observed. This is particularly of interest in neurological applications where the activity of only a small number of neurons can be recorded.

The global Granger causal Markov property is a general result that goes beyond the framework of multivariate Hawkes processes. For multivariate simple stationary point processes, it simplifies compared to the time series case as the increments of the  martingale $M(t)$ are mutually independent. For its formulation, we need some terminology from graph theory.

Let $G$ be a directed graph with vertex set $V$ and edges $E$. A path in the graph $G$ is a sequence of edges $\pi=(e_1,\ldots,e_n)$ with $e_i\in\{a_{i-1}\rightarrow a_{i},a_{i-1}\leftarrow a_{i}\}$ for vertices $a_0,\ldots,a_n\in V$ with $a_0=a$ and $a_n=b$. If the last edge on the path is $a_{n-1}\rightarrow b$, that is, the path ends with an arrowhead at $b$, we speak of a path $\pi$ from $a$ to $b$ \citep[also referred to as $B$-pointing path, cf][]{eichlerpathdiagr}. A vertex $a_i$ on a path $\pi$ is called a collider if the adjacent edges form the subpath $a_{i-1}\rightarrow a_{i}\leftarrow a_{i+1}$; otherwise $a_{i}$ is called a non-collider. A path $\pi$ is blocked by a set $C$ if and only if there exists one collider on the path that does not lie in $C$ or there exists one non-collider that lies in $C$.

\begin{definition}[Global Markov properties]
A multivariate stationary simple point process $N$ satisfies the global Granger causal Markov property with respect to a directed graph $G$ if the following condition holds:
$N_A$ does not Granger cause $N_B$ with respect to $N_S$ if every path from a vertex $a\in A$ to a vertex $b\in B$ is blocked by the set $S\without A$.

Furthermore, we say that $N$ satisfies the global Markov property with respect to an undirected graph $U$ if the processes $N_A$ and $N_B$ are independent conditionally on $N_C$ whenever the sets $A$ and $B$ are separated by $C$ in $U$, that is, every path between some vertex $a\in A$ and some vertex $b\in B$ contains at least one vertex $c\in C$.
\end{definition}

The following result states that a stationary Hawkes processes $N$ is Granger-Markov with respect to its Granger-causality graph $G$. Likewise, $N$ is also Markov with respect to the moral graph $G\moral$ derived from the Granger-causality graph $G$. Here, the moral graph of a directed graph $G$ is defined as the undirected graph $G\moral$ that has the same vertex set as $G$ and has edges $i\UE j\in G\moral$ if $i$ and $j$ are adjacent in $G$ or there exists a third vertex $k$ such that $G$ contains both edges $i\DE k$ and $j\DE k$. Furthermore, a vertex $a$ is an ancestor of another vertex $b$ if there exists a path $a\DE\ldots\DE b$ in $G$; the set of all ancestors of vertices in 
$B\subseteq V$ is denoted by $\ancestor{B}$. Finally, $G_A$ for some subset $A\subseteq V$ denotes the subgraph obtained from the graph $G$ by retaining all edges that connect vertices in $A$.

\begin{theorem}
\label{globalMP}
Let $N$ be a stationary multivariate Hawkes process and let $G$ be the Granger causality graph of $N$. Then
\begin{romanlist}
\item
$N$ satisfies the global Granger causal Markov property with respect to $G$;
\item
every subprocess $N_S$, $S\subseteq V$, satisfies the global Markov property with respect to $(G_{\ancestor{S}\cup S})\moral$.
\end{romanlist}
\end{theorem}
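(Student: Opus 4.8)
The plan is to reduce both assertions to a description of which sources of randomness the relevant subprocesses see, obtained from the Poisson cluster (branching) representation of a stationary Hawkes process \citep{hawkesoakes1974}. In that representation immigrants of type $i$ arrive as a Poisson process of rate $\nu_i$, and each event of type $j$ at time $s$ independently generates type-$i$ offspring on $(s,\infty)$ as an inhomogeneous Poisson process of intensity $\g_{ij}(\cdot-s)$; equivalently, solving the renewal-type equation $dN=\nu\,dt+\g\ast dN+dM$ gives the moving-average representation $dN=p_N\,dt+\sum_{k\ge0}\g^{\ast k}\ast dM$ with $\g^{\ast0}=\delta\,I_d$, in which $(\g^{\ast k})_{bj}$ vanishes unless $G$ contains a directed walk $j\DE\cdots\DE b$ of length $k$. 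The structural fact I will use is that, for any $B\subseteq V$, the subprocess $N_B$ equals a deterministic drift plus a linear functional of the mutually independent, independent-increment martingales $\{M_j:j\in\ancestor{B}\cup B\}$, and more precisely of the cluster randomness carried only by the vertices and edges of $G_{\ancestor{B}\cup B}$.

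For part (i) I would begin from the local Granger-causal Markov property, which is the first Proposition above together with the observation preceding it: with respect to the full filtration $\fclass$, $\lam_j(t)$ is $\fclass_{-i}(t-)$-measurable if and only if $\g_{ji}=0$, i.e.\ if and only if $i\DE j\notin G$. To pass to a subprocess I would use the predictable-projection formula for the $\fclass_S$-compensator of $N_b$, $b\in S$, namely $\lam_b^S(t)=\mean\big(\lam_b(t)\mid\fclass_S(t-)\big)$. Expanding $\lam_b$ through \eqref{Hawkes-intensity}, the contributions of components $j\in S$ are already $\fclass_S(t-)$-measurable, while those of $j\notin S$ reduce, upon iterating, to smoothed quantities $\mean\big(dN_j(s)\mid\fclass_S(t-)\big)$, $s<t$. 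The cluster representation makes the conditional law of the latent type-$j$ events explicit and shows that each such smoother transmits dependence of $\lam_b^S$ on the past of a component $N_a$ only along paths from $a$ to $b$ in $G$ that survive the conditioning on $N_{S\without A}$: a non-collider lying in $S\without A$ severs the path, since it renders the intervening latent chain observed, whereas conditioning on a common descendant in $S\without A$ is exactly what can activate a collider. Consequently, if every path from a vertex of $A$ to a vertex of $B$ is blocked by $S\without A$, then $\lam_b^S(t)$ is $\fclass_{S\without A}(t-)$-measurable for every $b\in B$, which is the asserted Granger non-causality.

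Part (ii) follows from (i) by the classical moralization argument. First, by the structural fact with $B=S$, the law of $N_S$ depends only on the randomness carried by $G^\ast:=G_{\ancestor{S}\cup S}$, so one may work entirely inside $G^\ast$. If $A$ is separated from $B$ by $C$ in $(G^\ast)\moral$, then, by the standard equivalence between separation in a moral graph and collider/non-collider blocking in the underlying directed graph restricted to an ancestral set, $G^\ast$ contains no $A$-to-$B$ path and no $B$-to-$A$ path unblocked by $C$, and no path between $A$ and $B$ through a common ancestor unblocked by $C$. Applying part (i) in both directions gives that $N_A$ does not Granger-cause $N_B$ and $N_B$ does not Granger-cause $N_A$ with respect to $N_{A\cup B\cup C}$; combining this with the absence of unblocked common-ancestor paths and with the mutual independence of the innovations $dM_1,\dots,dM_d$ --- which rules out any contemporaneous dependence, the simplification peculiar to simple point processes --- upgrades these two Granger non-causalities to the full conditional independence $N_A\perp N_B\mid N_C$.

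The step I expect to be the genuine obstacle is the smoothing analysis in part (i): controlling $\mean\big(dN_j(s)\mid\fclass_S(t-)\big)$ for the unobserved components $j\notin S$ and verifying that the collider/non-collider bookkeeping in the blocking criterion matches exactly the paths along which these smoothers propagate dependence. Everything else --- the ancestral reduction, the moralization, and the local-to-global passage once the subprocess intensities are understood --- is routine; it is here that the cluster representation, by exposing the conditional distribution of the latent events, does the real work.
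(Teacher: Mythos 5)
There is a genuine gap --- in fact two, and they sit exactly at the two load-bearing steps of your plan. For part (i), everything rests on the claim that the smoothers $\mean\big(dN_j(s)\mid\fclass_S(t-)\big)$, $j\notin S$, transmit dependence on $N_A$ only along paths from $A$ to $b$ in $G$ that are unblocked by $S\without A$, with non-colliders in the conditioning set severing a path and colliders activated only by conditioning on descendants. You do not derive this from the cluster representation; you assert it, and it is the theorem itself restated as a nonlinear filtering statement. Conditioning a cluster process on a partial observation $\fclass_S(t-)$ destroys the Hawkes/branching structure (the latent events of different branches become dependent given what is observed), and controlling exactly which conditional dependences survive is the hard content --- which you yourself flag as the ``genuine obstacle'' and leave open. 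For part (ii), you reverse the logical order and then need an unproved ``upgrade'': mutual Granger non-causality of $N_A$ and $N_B$ with respect to $N_{A\cup B\cup C}$, plus independence of the innovations, concerns infinitesimal one-step-ahead conditional distributions given the past, whereas the global Markov property asserts conditional independence of the entire two-sided stationary trajectories of $N_A$ and $N_B$ given the whole path of $N_C$, including its future. Passing from the dynamic statements to that static, whole-path statement is precisely the delicate local-to-global step, and no argument is given.

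The paper's proof runs in the opposite direction and thereby avoids both problems. It first proves (ii) by a likelihood factorization: on $[t_0,t]$ the log-likelihood splits into terms $\int\log\lam_i\,dN_i-\int\lam_i\,dt$ that are $\fclass_{\closure{i}}(t)$-measurable with $\closure{i}=\{i\}\cup\parent{i}$ complete in $G\moral$, which gives a clique factorization and hence the global (undirected) Markov property, extended to stationary processes on $\rnum$ by letting $t_0\to-\infty$. Part (i) is then deduced from (ii): blocking of every $A$-to-$B$ path by $B\cup C$ is equivalent to separation of $\parent{B}\without(B\cup C)$ from $A$ by $B\cup C$ in $(G_{\ancestor{A\cup B\cup C}})\moral$, so $\fclass_{\parent{B}}(t)\indep\fclass_{A}(t)\given\fclass_{B\cup C}(t)$, and the innovation-theorem chain
\begin{align*}
\lam_b^{A\cup B\cup C}(t)
=\mean\big(\lam_b^{\parent{b}}(t)\mid\fclass_{A\cup B\cup C}(t)\big)
=\mean\big(\lam_b^{\parent{b}}(t)\mid\fclass_{B\cup C}(t)\big)
=\lam_b^{B\cup C}(t)
\end{align*}
finishes the argument without ever filtering latent components. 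If you wish to keep your route, the missing lemma you must supply is essentially this factorization (or an equivalent conditional-law computation for the partially observed cluster process); your predictable-projection starting point is correct and coincides with the paper's, but on its own it does not carry either step.
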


We note that the graph $H=(G_{\ancestor{S}\cup S})\moral$ in (ii) can be reduced further to a graph $H(S)$ with vertex set $S$ by extending the subgraph $H_S$ by additional edges $i\UE j$ whenever $i$ and $j$ are not separated by $S\without\{i,j\}$ in $H$.
 
The global Granger causal Markov property allows an intuitive interpretation of pathways in Granger causality graphs. Moreover, it is of fundamental importance for graphical approaches to causal discovery. Here, the main problem is to distinguish true cause-effect relationships from so-called spurious causation due to unobserved variables. Under the global Granger-causal Markov property, the causal structure of the system including any relevant unobserved variables implies certain Granger non-causal relations among the observed variables and any subset thereof. Algorithms for causal discovery exploit this link by identifying all causal structures that are consistent with the observed Granger non-causal relations. For more details, we refer to \citet{eichlerCSPA2012,eichlerPTRSA2013}.

The largest problem for the implementation of such algorithms for causal discovery is the extremely large number of models that need to be fitted: $2^d-d-1$ models for $d$ variables. This prohibits the use of iterative methods for parameter estimation such as, for instance, the EM algorithm by \citet{lewis-11}. In the next section, we therefore discuss nonparametric estimation of the link function by discretizing the point process and applying standard least squares estimation for autoregressive time series.


\section{Nonparametric Estimation and Identification}
\label{section-estimation}

Our approach for nonparametric estimation of the link function $\g$ is via discretization and consequently using methods from time series analysis. Again, as in section \ref{section-hawkes} we observe a multivariate point process $N=\big(N(t)\big)_{t\in\rnum}$ with component processes $N_i$, $1\leq i \leq d$. The conditional intensity function is once more given by \eqref{Hawkes-intensity}, where the component functions of $\g$ belong to a class of non-negative integrable functions that is specified later on in the section. Our objective is the nonparametric estimation of the link functions of the Hawkes process, that is, we do not assume any parametric form of the link functions such as an exponential form. For the purpose of discretization we define for fixed $h>0$
\begin{align*} 
\Y_{i,t} = N_i(t\,h)-N_i((t-1)\,h)
\end{align*}
for all $t\in\znum$ and $1\leq i\leq d$. This is equivalent to dividing the real line into intervals of width $h$. For every fixed $h$, $\Y$ represents a $d$-dimensional time series displaying thge number of jumps in time intervals $((t-1)\,h,t\,h]$ for $t\in\znum$. Thus, for $h$ small enough, the random variables $\Y_t$ are approximately binary. Furthermore the considered point processes are orderly and hence the probability that more than one jump takes place in an interval of length $h$ is of order $o(h)$. This allows us to approximate the conditional mean $\mean[\Y_{i,t+1}|\F_{ht}]$ by
\begin{align*}
\mean[\Y_{i,t+1}|\F_{ht}]
&= \prob(N_i((t+1)\,h)-N_i(t\,h)=1\given\F_{ht})+o(h)\\
&= h\,\nu_i+h\,\lsum_{j=1}^d \int_{0}^{\infty} \g_{ij}(s)\,dN_j(t\,h-s) +o(h) \\
&= h\,\nu_i+h\,\lsum_{j=1}^d\lsum_{u=1}^{\infty} \int_{0}^{h} \g_{ij}(uh+\alpha)\,dN_j(t\,(h-u)-\alpha) + o(h).
\end{align*}
If the link function $\g$ is continuous and $h$ is small enough, we can approximate $\g$ by a piecewise constant function, which yields
\begin{equation}
\label{approximateARequation}
\mean(\Y_{t}|\F_{h(t-1)})\approx h\,\nu+h\,\lsum_{u=1}^{\infty} \g(u\,h)\,\Y_{t-u}.
\end{equation}
This suggests to estimate the link function by a least squares approach.

We introduce some notation. First let $\Gh(u)=\cov(\Y_{t},\Y_{t-u})$ for $u\geq 0$ and $\Gh(j)=\Gh(-j)'$ for $j<0$ be the covariance function of the process $\Y$, which depends on $h$. Furthermore with $\Yk_t=\vecc\big(\Y_{t-1},\ldots,\Y_{t-k}\big)$ we set 
\begin{align*}
\gk&=\cov(\Y_t,\Yk_t)=(\Gh(u))_{u=1,\ldots,k},\\
\Gk&=\cov(\Yk_t,\Yk_t)=(\Gh(u-v))_{u,v=1,\ldots,k}.
\end{align*}
Now suppose that the process $N$ has been observed on the interval $[0,\TN]$ and set $\Th=\TN/h$. Then the above linear approximation for the conditional mean of $\Y_t$ leads to the least squares problem of minimizing
\[
\lsum_{t=k+1}^{\Th}
\norm{\Y_t-\nuh-{\ghk}\,\Yk_t}^2_2.
\]
over the parameters $\nuh=\nu\,h$ and
\[
\ghk=(h\,\g(h),\ldots,h\,\g(hk)).
\]
The above expression is minimized by
\begin{align*}
\hatghk&=\hatgk\,\hatGk^{-1}\\
\intertext{and}
\hatnuh&=\barY-\hatghk\,\barYk,
\end{align*}
where with $\Thk=\Th-k$
\[
\hatgk=\tfrac{1}{\Thk}\lsum_{t=k+1}^{\Th} (\Y_t-\barY)(\Yk_t-\barYk)'
\]
is the sample covariance of $\Y_t$ and $\Yk_t$ and $\hatGk$, $\barY$, $\barYk$ are defined similarly.

With these definitions we are able to derive the desired asymptotic results for $\ghk$. Therefore we denote by $\norm{B}^2_2=tr(B'B)$ the Euclidean norm and by  $\norm{B}=\sup_{\norm{x}_2\leq 1}\norm{Bx}_2$ the spectral norm of $B$. We note that $\norm{B}^2$ equals the largest eigenvalue of the matrix $B'B$. For subsequent proofs recall the inequalities
$\norm{AB}_2\leq\norm{A}_2\,\norm{B}_2$ and 
$\norm{A}\leq\norm{A}_2\leq\sqrt{r}\norm{A}$, where $r$ is the rank of $A$. 

\begin{theorem}\label{convergence}
Let $N$ be a Hawkes process with baseline intensity $\nu$ and link function $\g$ satisfying Assumption \ref{assumppointproc}. Additionally suppose that the following conditions hold:
\begin{romanlist}
\item\label{A1}
Let $k=k_T$ and $h=h_T$ be functions of $T$ such that
\[
k_T\,h_T\to\infty,\qquad
k_T\,h_T^{2}\to 0,\qquad\text{and}\qquad
\SSS{\frac{k^2_T}{T}}\to 0\qquad\text{as }T\to\infty.
\]
as $T\to\infty$.
\item\label{A2}
The link function $\g$ satisfies $\bignorm{\int_0^\infty\g(u)\,du}<\infty$.
\item\label{A3}
The link function $\g$ is Lipschitz continuous and decreases to zero with $\norm{\g(u)}\leq C\,u^{-1}$ and 
\[
\int_{h_Tk_T}^{\infty}\norm{\g(v)}\,dv=o(1),\qquad T\to\infty.
\]
\end{romanlist} 
Then the least squares estimators $\hatghk$ and $\hatnuh$ are consistent,
\[
\norm{\hatghk-\ghk}_2\pconv 0
\qquad\text{and}\qquad
\norm{\hatnuh-\nuh}_2\pconv 0.
\]
as $T\to\infty$.
\end{theorem}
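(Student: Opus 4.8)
The plan is to compare $\hatghk$ with the population least--squares coefficient $\g^{h,k}_{\ast}:=\gk\,\Gk^{-1}$ and to split
\[
\hatghk-\ghk=\bigl(\hatghk-\g^{h,k}_{\ast}\bigr)+\bigl(\g^{h,k}_{\ast}-\ghk\bigr)
\]
into a stochastic fluctuation term and a deterministic approximation (bias) term; the overall scheme follows the analysis of increasing--order autoregressions in \citet{lewis-85}, the novelty being that both the regression coefficients and the covariance matrix $\Gk$ degenerate with $h$, and that $\Y$ is only approximately an autoregression of infinite order. As preliminaries I would record the orders of the moments of the binned process, all of which follow from \eqref{cumulants} together with the covariance and (integrable) cumulant densities of a Hawkes process (cf.\ \citet{bacry-12,jovanovic2015}): $\Gh(0)=h\,\mathrm{diag}(p_N)+O(h^{2})$, $\norm{\Gh(u)}=O(h^{2})$ for $u\ge1$, and $\sum_{u\in\znum}\norm{\Gh(u)}=O(h)$. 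I would also establish the two--sided spectral bound $c_{0}\,h\,I\preceq\Gk\preceq C_{0}\,h\,I$, valid for all small $h$ and all $k$ with $c_{0},C_{0}>0$ independent of $h,k$: the upper bound is immediate from $\lambda_{\max}(\Gk)\le\sum_{u\in\znum}\norm{\Gh(u)}=O(h)$, whereas the lower bound rests on the aliasing identity expressing the spectral density $f^{h}$ of $(\Y_t)$ through the Bartlett spectrum $f_N$ of $N$ as $f^{h}(\theta)\propto h^{-1}\sum_{\ell\in\znum}\bigl|\hat g(\tfrac{\theta+2\pi\ell}{h})\bigr|^{2}f_{N}(\tfrac{\theta+2\pi\ell}{h})$ with $\hat g(\omega)=\int_{0}^{h}e^{-i\omega s}\,ds$; since $h^{-1}\sum_{\ell}\bigl|\hat g(\tfrac{\theta+2\pi\ell}{h})\bigr|^{2}=h$ and $f_N$ is bounded away from $0$ uniformly (by \eqref{integrability}), this yields $f^{h}(\theta)\succeq c_{0}h I$ and hence $\Gk\succeq c_{0}'h I$.

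For the stochastic term I would use the algebraic identity
\[
\hatghk-\g^{h,k}_{\ast}=(\hatgk-\gk)\,\hatGk^{-1}-\g^{h,k}_{\ast}\,(\hatGk-\Gk)\,\hatGk^{-1}
\]
together with the submultiplicative inequalities recalled before the theorem. A second--moment computation from \eqref{cumulants}, using the preliminary orders and keeping track of the diagonal contributions arising from $\mean(dN(t)^{2})=\mean(dN(t))$, gives $\mean\norm{\hat\Gh(0)-\Gh(0)}_{2}^{2}=O(h^{2}/T)$ and $\mean\norm{\hat\Gh(j)-\Gh(j)}_{2}^{2}=O(h^{3}/T)$ for $j\ne0$, hence $\mean\norm{\hatGk-\Gk}_{2}^{2}=O(k\,h^{2}/T+k^{2}h^{3}/T)$ and $\mean\norm{\hatgk-\gk}_{2}^{2}=O(k\,h^{3}/T)$; by Markov's inequality and condition \ref{A1} (in particular $k_T^{2}/T\to0$) both are $o_{p}(h^{2})$. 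Combined with $\Gk\succeq c_{0}hI$ this yields $\hatGk\succeq\tfrac{c_{0}}{2}hI$ with probability tending to one, so $\norm{\hatGk^{-1}}=O_{p}(h^{-1})$, and moreover $\norm{\g^{h,k}_{\ast}}_{2}\le\norm{\gk}_{2}\,\norm{\Gk^{-1}}=O(h^{2}\sqrt{k})\cdot O(h^{-1})=O(h\sqrt{k})=o(1)$ by \ref{A1} ($k_Th_T^{2}\to0$). Substituting these rates into the displayed identity gives $\norm{\hatghk-\g^{h,k}_{\ast}}_{2}=o_{p}(h)\,O_{p}(h^{-1})+O(h\sqrt{k})\,o_{p}(h)\,O_{p}(h^{-1})=o_{p}(1)$.

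For the bias term I would make precise the autoregressive representation behind \eqref{approximateARequation}: writing $M_{t}:=\Y_{t}-\mean(\Y_{t}\mid\F_{h(t-1)})$, which is a martingale difference for $(\F_{ht})_{t}$ and hence uncorrelated with every $\Y_{t-u}$, $u\ge1$, and $R_{t}$ for the remainder collecting the conditional mean of the current--bin contribution to the intensity together with the Riemann error of replacing $\int_{uh}^{(u+1)h}\g(w)\,dw$ by $h\,\g(uh)$, one has the exact identity $\Y_{t}=h\,\nu+h\sum_{u\ge1}\g(uh)\,\Y_{t-u}+R_{t}+M_{t}$. Taking the covariance with $\Yk_{t}$ and using $\cov(M_{t},\Yk_{t})=0$ together with the Yule--Walker equations $\gk=\g^{h,k}_{\ast}\Gk$ gives $(\g^{h,k}_{\ast}-\ghk)\Gk=A_{1}+A_{2}$, with $A_{1}=h\sum_{u>k}\g(uh)\,\cov(\Y_{t-u},\Yk_{t})$ the truncation tail and $A_{2}=\cov(R_{t},\Yk_{t})$. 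By the preliminary orders, $\norm{A_{1}}_{2}\le O(h^{3}\sqrt{k})\sum_{u>k}\norm{\g(uh)}\le O(h^{2}\sqrt{k})\int_{h_Tk_T}^{\infty}\norm{\g(v)}\,dv=o(h)$ using \ref{A3} and $k_Th_T^{2}\to0$. For $A_{2}$, \ref{A3} (Lipschitz) bounds the lag--$u$ Riemann error by $h\cdot\mathrm{osc}(\g;[uh,(u+2)h])$, and since $\g$ decreases to zero these oscillations telescope to an $O(1)$ total; combined with the covariance bound on $\cov(\Y_{t-u},\Yk_{t})$ and an elementary estimate of the current--bin remainder this gives $\norm{A_{2}}_{2}=o(h)$. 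Dividing by $\Gk\succeq c_{0}hI$ gives $\norm{\g^{h,k}_{\ast}-\ghk}_{2}\le(\norm{A_{1}}_{2}+\norm{A_{2}}_{2})/(c_{0}h)=o(1)$, so that $\norm{\hatghk-\ghk}_{2}\pconv0$. Finally, writing $\hatnuh-\nuh=(\barY-\mean\barY)-\bigl((\hatghk-\ghk)\barYk+\ghk(\barYk-\mean\barYk)\bigr)+\bigl(\mean\barY-\nuh-\ghk\mean\barYk\bigr)$, the first bracket is $O_{p}(h/\sqrt{T})$ by a variance bound, the third equals $\bigl(h\sum_{u>k}\g(uh)\bigr)\mean\Y_{t}+\mean R_{t}=o(1)$ by \eqref{intensity} and the estimates above, and the middle bracket is $o_{p}(1)$ since $\norm{\hatghk-\ghk}_{2}=o_{p}(1)$, $\norm{\ghk}_{2}=O(\sqrt{h})$, $\norm{\mean\barYk}_{2}=O(h\sqrt{k})=o(1)$ and $\norm{\barYk-\mean\barYk}_{2}=O_{p}(h\sqrt{k/T})$; hence $\norm{\hatnuh-\nuh}_{2}\pconv0$.

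The step I expect to be the main obstacle is the control of the stochastic term, for two related reasons. First, because $\lambda_{\min}(\Gk)$ degenerates at the exact rate $h$, the perturbation bound for $\hatGk^{-1}$ is useful only if the sampling error of the $dk\times dk$ block--Toeplitz matrix $\hatGk$ is genuinely $o_{p}(h)$; this forces one to compute the second--order moments of the sample autocovariances to the correct power of $h$ — in particular to isolate the $O(h^{2}/T)$ error carried by the lag--zero blocks via the point--process identity $\mean(dN(t)^{2})=\mean(dN(t))$, as opposed to the $O(h^{3}/T)$ error of the remaining blocks — and then to verify that the rate conditions in \ref{A1} are precisely what balance these powers of $h$ against $k$ and $T$ (with $k_T^{2}/T\to0$ governing the matrix estimation, $k_Th_T\to\infty$ the truncation, and $k_Th_T^{2}\to0$ the discretization error). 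Second, the uniform lower spectral bound $\Gk\succeq c_{0}hI$ cannot be read off from the coarse moment orders, since the off--diagonal blocks $\Gh(u)$, although individually $O(h^{2})$, sum to something of order $h$ — the same order as $\Gh(0)$ — so a genuine Fourier/aliasing argument, based on the boundedness above and below of the Hawkes Bartlett spectrum, is required.
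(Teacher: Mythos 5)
Your proposal is correct in substance, but it organizes the argument differently from the paper, so a comparison is worth recording. The paper never introduces the population projection coefficient $\gk\,\Gk^{-1}$: it writes $\hatghk-\ghk=\big(\hatgk-\ghk\,\hatGk\big)\hatGk^{-1}$, replaces sample means by $p_N h$, and expands the resulting score into three stochastic terms --- a martingale-difference term $U_{1,T}$, a within-support discretization term $U_{2,T}$, and a truncation term $U_{3,T}$ --- each shown to be $o_P(h)$ and then multiplied by $\norm{\hatGk^{-1}}=O_P(h^{-1})$. You instead use the Berk-style two-stage split $\hatghk-\ghk=(\hatghk-\gk\Gk^{-1})+(\gk\Gk^{-1}-\ghk)$, controlling the sampling error through moment bounds on $\hatgk-\gk$ and $\hatGk-\Gk$ and the approximation error deterministically through the Yule--Walker identity, with the truncation tail $A_1$ and the Riemann/current-bin remainder $A_2$ playing the roles of the paper's $U_{3,T}$ and $U_{2,T}$ at the level of population covariances rather than of random sums. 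The technical core is the same in both routes: the two-sided spectral bound $c_0 h I\leq\Gk\leq C_0 h I$ (your aliasing identity with $f_{NN}$ bounded below is equivalent to the paper's argument isolating the white part $\tfrac{h}{2\pi}p_N$ in $f_{\Y\Y}$, which is the paper's Lemma on covariance orders), cumulant-based rates for the sample covariances via \eqref{cumulants} with the diagonal contribution $\mean(dN(t)^2)=\mean(dN(t))$ (your refined $O(kh^2/T+k^2h^3/T)$ is consistent with, indeed slightly sharper than, the paper's $O_P(k^2h^2/T)$, and either suffices under $k_T^2/T\to0$), the martingale-difference orthogonality that kills the leading stochastic term, and the Lipschitz/tail conditions for the bias. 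Your route buys a cleaner separation of stochastic and deterministic error and makes transparent which rate condition governs which term; the paper's route avoids having to quantify $\gk\Gk^{-1}-\ghk$ explicitly, since the bias enters only through the random terms $U_{2,T},U_{3,T}$, and it reuses the same decomposition later for the asymptotic-normality analysis.

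Two small points where your sketch is thinner than the paper's proof but fillable: the bound $\norm{A_2}_2=o(h)$ (in particular the ``current-bin'' remainder, which is most cleanly handled by a Cauchy--Schwarz bound of order $O(h^{5/2}\sqrt{k})$ per your covariance normalization) needs the same second-moment computation with $q_{NN}$ and $D_N$ that the paper carries out for $U_{2,T}$; and your telescoping of the oscillations uses the monotone decrease of $\g$, whereas the paper gets by with Lipschitz continuity alone, bounding the corresponding term by $O(h^3k)=o(h)$ via the condition $k_Th_T^2\to0$. Neither point affects the validity of your argument under the stated assumptions.
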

		
The first assumption of the above theorem requires usual rate conditions on the sequences $k_T$ and $h_T$. The first condition ensures that the support of the estimated link function increases with $T$ while the other two conditions restrict the growth of the number of parameters. The second assumption $\|\int_0^\infty \g(u) du\|<1$ ensures that the Hawkes process is stationary with absolutely integrable autocovariances density. Assumption (iii) restricts the tail of the link function; it is satisfied, for instance, for exponentially decreasing link functions and hence is not restrictive for practical applications.

The next theorem generalizes Theorem \ref{convergence} to a functional convergence. Therefore we set $\hat\g_T$ to be the step function defined by
\begin{align*}
\hat\g_T(u)=\tfrac{1}{h}\,\hatghk_{[u/h]},\qquad 0\leq u\leq k\,h
\end{align*}
and zero otherwise.

\begin{theorem}
Under the assumptions of Theorem \ref{convergence} it holds
\begin{align*}
\int_{0}^{\infty}\bignorm{\hat\g_T(u)-\g(u)}_2\,du\pconv 0\qquad\text{as } T\rightarrow\infty.
\end{align*}
\end{theorem}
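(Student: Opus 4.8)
The plan is to deduce this $L^1$-functional statement from the coefficientwise convergence in Theorem~\ref{convergence}, taking care of the number of blocks $k=k_T$. Writing $h=h_T$, the first step is to split off the tail,
\[
\int_{0}^{\infty}\bignorm{\hat\g_T(u)-\g(u)}_2\,du
=\int_{0}^{kh}\bignorm{\hat\g_T(u)-\g(u)}_2\,du+\int_{kh}^{\infty}\bignorm{\g(u)}_2\,du ,
\]
using that $\hat\g_T$ vanishes beyond $kh$. Since $\norm{A}_2\leq\sqrt{d}\,\norm{A}$ for a $d\times d$ matrix $A$, the last (deterministic) integral is at most $\sqrt{d}\int_{h_Tk_T}^{\infty}\norm{\g(v)}\,dv=o(1)$ by Assumption~\ref{A3}.

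Next I would discretise the remaining integral over the $k$ bins. On $((j-1)h,jh]$ the estimator equals the constant matrix $h^{-1}\hatghk_j$ whereas $\ghk_j=h\,\g(jh)$; inserting $\g(jh)$ and using Lipschitz continuity of $\g$ (Assumption~\ref{A3}) to bound $\norm{\g(jh)-\g(u)}_2\leq\sqrt{d}\,L\,h$ on that bin yields
\[
\int_{(j-1)h}^{jh}\bignorm{\hat\g_T(u)-\g(u)}_2\,du\leq\bignorm{\hatghk_j-\ghk_j}_2+\sqrt{d}\,L\,h^{2}.
\]
Summing over $j=1,\dots,k$, the discretisation error contributes $\sqrt{d}\,L\,k_Th_T^{2}=o(1)$ by Assumption~\ref{A1}, so it remains to show $\lsum_{j=1}^{k}\norm{\hatghk_j-\ghk_j}_2\pconv0$; by the Cauchy--Schwarz inequality this sum is at most $\sqrt{k}\,\norm{\hatghk-\ghk}_2$.

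At this point the bare consistency statement of Theorem~\ref{convergence} does not suffice, since $\sqrt{k_T}\to\infty$; instead I would go back into its proof and read off the rate of $\norm{\hatghk-\ghk}_2$. There this error is bounded by a stochastic term arising from the sampling fluctuations of $\hatgk$ and $\hatGk$ around $\gk$ and $\Gk$ (controlled through $\norm{\hatgk-\gk}_2$, $\norm{\hatGk-\Gk}$ and the norm of $\Gk^{-1}$), together with a deterministic approximation bias $\norm{\gk\Gk^{-1}-\ghk}_2$ coming from truncating the autoregression at lag $k$ and from the within-bin and $o(h)$ approximations used to pass to \eqref{approximateARequation}. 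The point is that all of these terms carry extra smallness beyond what is needed for mere consistency: the sampling errors of the bin-count covariances are of smaller order than $T^{-1/2}$ because the increments $\Y_{t}$ have mean of order $h$, and the bias picks up extra factors of $h$ (covariances of increments over disjoint bins of width $h$ are of order $h^{2}$) and of $k^{-1}$ (an integrable, eventually decreasing $\g$ satisfies $v\,\norm{\g(v)}=o(1)$). Tracking these, one obtains $\sqrt{k_T}\,\norm{\hatghk-\ghk}_2=o_p(1)$ under the rate conditions $k_T^{2}/T\to0$ and $k_Th_T^{2}\to0$ of Assumption~\ref{A1}. Combining this with the two previous bounds proves the theorem.

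The main obstacle is exactly this last step. $L^{1}$-convergence of the step-function estimator is genuinely stronger than Frobenius convergence of the coefficient matrix---the gap is the factor $\sqrt{k_T}$---so one cannot simply quote Theorem~\ref{convergence} but must re-examine its internal estimates and check that, with their true orders in $k_T$, $h_T$ and $T$, they still vanish after being multiplied by $\sqrt{k_T}$; the careful bias analysis, which has to use both the integrable-tail hypothesis in Assumption~\ref{A3} and the rate $k_Th_T^{2}\to0$, is the delicate part. The splitting, the Lipschitz discretisation estimate, and the tail bound are routine.
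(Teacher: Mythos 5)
You follow the same decomposition as the paper (tail beyond $kh$, within-bin Lipschitz error of order $k_Th_T^2$, plus the blockwise estimation error), and your diagnosis of the crux is correct: the estimation part equals $\sum_{j=1}^{k}\norm{\hatghk_j-\ghk_j}_2$, which can only be bounded by $\sqrt{k}\,\norm{\hatghk-\ghk}_2$, so the bare statement of Theorem \ref{convergence} does not suffice. (The paper's own proof writes this term as $\norm{\hatghk-\ghk}_2$ and cites Theorem \ref{convergence}, i.e.\ it silently drops the factor $\sqrt{k}$; the inequality without that factor actually goes the wrong way, since the $\ell^1$ norm of the block norms dominates the Frobenius norm of the stacked matrix.)

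The genuine gap is that your pivotal claim, $\sqrt{k_T}\,\norm{\hatghk-\ghk}_2=o_P(1)$, is asserted rather than proved, and it does not follow from the orders actually established inside the proof of Theorem \ref{convergence}. That proof gives $\norm{\hatGk^{-1}}=O_P(h^{-1})$, $\mean\norm{U_{2,T}}_2=O(h^3k)$ and $\mean\norm{U_{3,T}}_2=o(h)$, hence contributions of order $O_P(h^2k)$ and $o_P(1)$ to $\norm{\hatghk-\ghk}_2$; after multiplication by $\sqrt{k}$ these become $O_P(h^2k^{3/2})$ and $o_P(\sqrt{k})$. The first need not vanish under condition (i): take $h_T\asymp k_T^{-3/4}$ and $T\asymp k_T^{3}$, which satisfies $k_Th_T\to\infty$, $k_Th_T^2\to 0$, $k_T^2/T\to 0$, yet $h_T^2k_T^{3/2}\asymp 1$. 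The second is vacuous, because condition (iii) only provides $\int_{h_Tk_T}^{\infty}\norm{\g(v)}\,dv=o(1)$, with no rate. So ``tracking the existing estimates'' cannot close the argument; you would need genuinely sharper bounds, for instance splitting $U_{2,T}$ and $U_{3,T}$ into their expectations and mean-zero fluctuations, bounding the bias lag by lag (each lag-$v$ block of $\mean U_{2,T}$ is $O(h^3)$, giving Frobenius norm $O(h^3\sqrt{k})$, which does suffice; for $U_{3,T}$ one must additionally exploit the decay $\norm{\g(u)}\le Cu^{-1}$ together with the tail condition), and gaining a factor $\Thk^{-1/2}$ on the fluctuation parts. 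That finer analysis is the real content of the functional statement beyond Theorem \ref{convergence}, and it is missing from your proposal — as it is, admittedly, from the paper's own proof. Your tail and discretization steps are fine.
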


\begin{proof}   
Decomposing the integral into the approximation error and the estimation error we find
\[
\int_{0}^{\infty} \norm{\hat\g_T(u)-\g(u)}_2\,du
\leq\int_{0}^{hk} \norm{\hat\g_T(u)-\g(u)}_2\,du
+\int_{hk}^{\infty}\norm{\g(u)}_2\,du.
\]
Here the second term is of order $O(h\sqrt{k})$ while the first term can be bounded by
\[
\norm{\hatghk-\ghk}_2+h\,\int_{0}^{hk}\norm{\g(uh)-\g([u+1]h)}_2\,du.
\]
Using Lipschitz continuity of the link function, the second term is of order $O(h^2\,k)=o(1)$ while the first term converges to zero in probability by Theorem \ref{convergence}.
\end{proof}

\begin{figure}
\centering
\includegraphics[width=11cm]{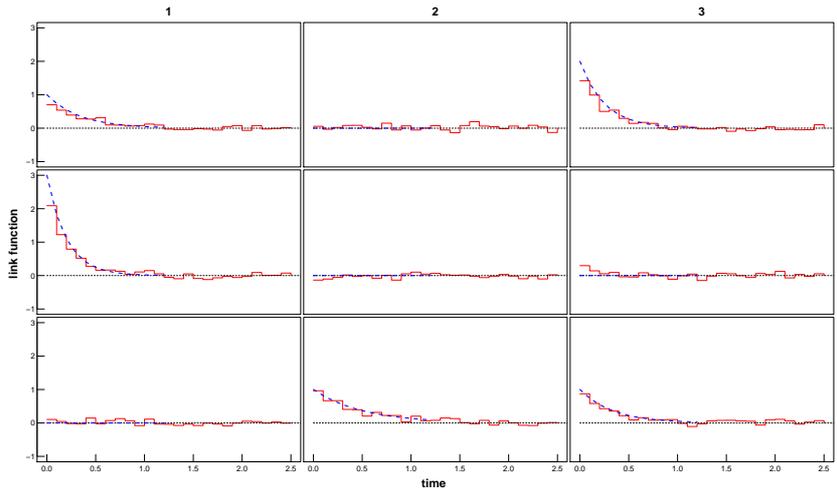}
\caption{Estimation of the Hawkes link functions for simulated data: nonparametric estimates (solid) and true link functions (dashed).}
\label{figuree} 
\end{figure}

Figure \ref{figuree} illustrates our estimation procedure based on simulated data. For the simulation of a three dimensional Hawkes process we used the method by \citet{ogata-81} based on a thinning algorithm. The true link functions, given by the dashed lines, were taken to have the form $\g_{ij}(u)=\alpha_{ij}\,\exp(-\beta_{ij}\,u)$ for $1\leq i,j\leq 3$ with different coefficients. In total, the simulated data contained approximately 7500 events. For the estimation of the link function, $h=0.1$ and $k=25$ were chosen as discretization parameters. The step functions in Figure \ref{figuree} are the obtained estimates of the link functions.


\subsection{Application}
\label{section-application}

As an application we analyzed neural spike train data from the lumbar spinal
dorsal horn of a pentobarbital-anaesthetised rat during noxious
stimulation. The firing times of ten neurons were recorded
simultaneously by a single electrode with an observation time
of 100s. The data have been measured and analyzed by \citet{sandk-94} who studied discharge patterns of spinal dorsal horn neurons under various conditions.

\begin{figure}\centering
\includegraphics[width=15cm]{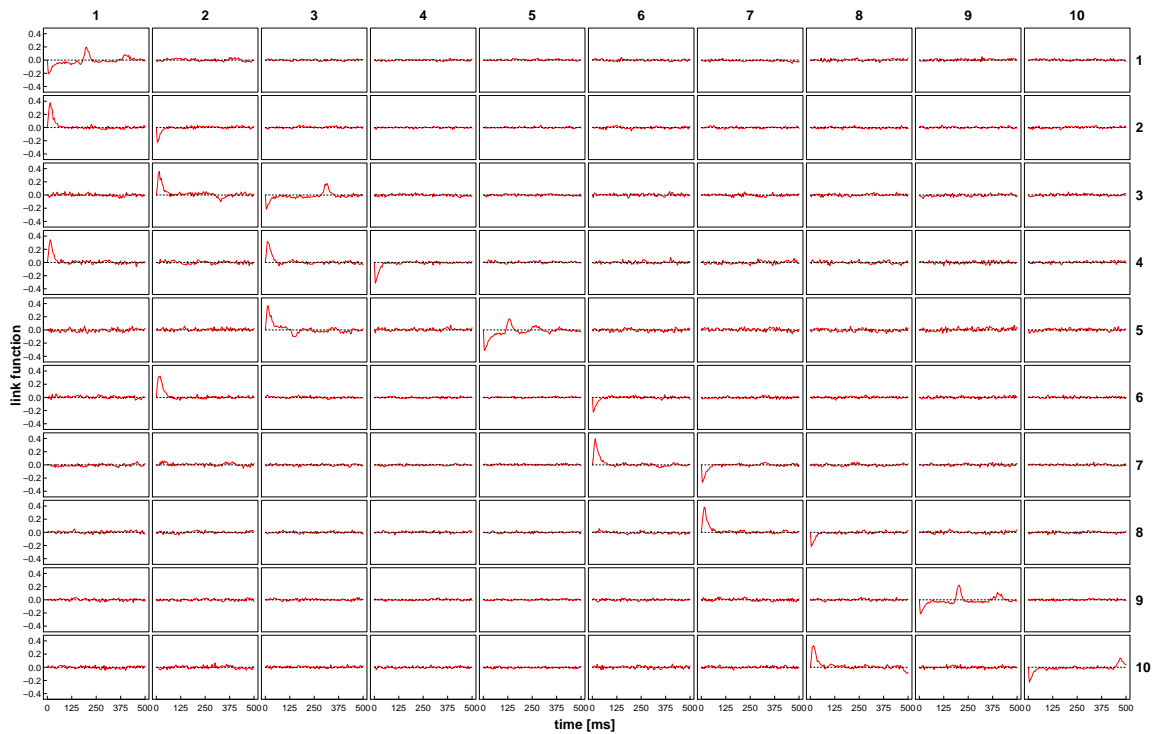}
\caption{Nonparametric estimates of the Hawkes link functions for the neural spike train data.}
\label{data}
\end{figure}

To determine the Granger causality graph for the ten neurons, we fit a simple multivariate Hawkes model to the data. As the firing pattern of neurons usually 
show a refractory period during which a neuron is less likely to fire again, a linear Hawkes model with non-negative link functions is not fully appropriate. However, our application shows that our estimation method is robust against such misspecifications. In contrast, more appropriate generalised Hawkes models \citep[e.g.][]{linigerphd} require optimisation techniques that are too time consuming to allow the analysis for a large number of subprocesses as required for causal learning. 

For the analysis we have chosen $h=0.5$ for the discretization parameter and $k=100$ for the order of the fitted time series model. The resulting nonparametric estimates of the link functions are shown in Figure \ref{data}. Additionally the estimated baseline intensities for the ten neurons are given in Table \ref{nutable}. As expected, the link functions on the diagonal indicate a self-inhibition after the firing of a neuron. In addition, five of the ten neurons show also an excitatory effect after time periods ranging from 125\,ms for neuron 5 to almost 500\,ms for neuron 10. This observation is in line with the rhythmic activity that is detectable in the spectra of the processes \citep{sandk-94}.

\begin{table}\centering
\caption{Estimates of the baseline intensities for neural spike train data.}
\footnotesize
\begin{tabular}{cccccccccc}
\hline
\hline
$\hat\nu_1$&$\hat\nu_2$&$\hat\nu_3$&$\hat\nu_4$&$\hat\nu_5$&
$\hat\nu_6$&$\hat\nu_7$&$\hat\nu_8$&$\hat\nu_9$&$\hat\nu_{10}$\\
\hline
0.1055 &0.0299 &0.1064 &0.0386 &0.1993 &0.0322 &0.0327 &0.0218 &0.1066 &0.0968\\
\hline
\hline
\end{tabular}
\label{nutable}
\end{table}

Next, we note that for most of the 90 possible directed links between the ten neurons the link function is approximately zero while only 10\% of the link functions show a clear positive peak indicating an excitatory effect. In all these cases the shape, time and intensity of the link functions are very similar with a peak at about 17\,ms and an intensity of approximately $0.38$ spikes per millisecond. For the nine clearly non-zero link functions $\g_{ij}$ we draw a corresponding edge $j\DE i$ in the Granger causality graph (Fig.~\ref{graph}). For instance, the link function $\g_{21}$ is non-zero and hence the graph contains an edge from node 1 to node 2. We note that in the graph, neuron nine is completely isolated from the other neurons. This is remarkable given the fact that it fires with the same frequency as for instance the first neuron. 
Finally we note that the decision whether a particular link function is statistically significantly non-zero should be based on a test; the construction of such a test that is feasible for a large number of link-functions and sub-models is planned in future work.

\begin{figure}\centering
\includegraphics[width=15cm]{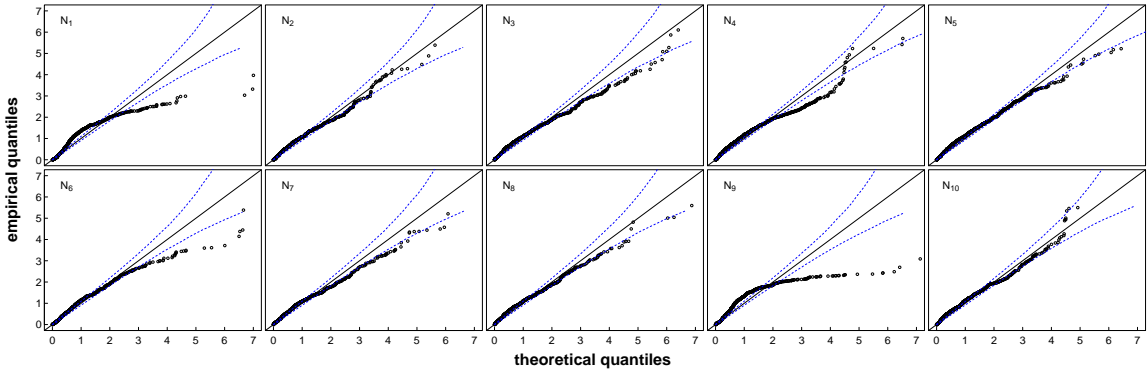}
\caption{Goodness-of-fit of Hawkes model for neural spike train data: quantile plot for the residual interarrival times; the dashed lines give the pointwise 95\% intervals for the interarrival times of a Poisson process.}
\label{residuals}
\end{figure}

For assessing the goodness-of-fit of the Hawkes model with nonparametrically estimated link functions, we consider the residual processes $R_i$, which are obtained from the event times $\tau_{ij}$, $j\in\nnum$, of the process $N_i$ by the random time change
\[
\sigma_{ij}=\Lambda_i(\tau_{ij}),
\]
that is, $R_i$ is the point process of events at times $\sigma_{ij}$, $j\in\nnum$. Then $R_i$, $i=1,\ldots,d$, are independent Poisson processes with unit intensity \citep[Thm 7.4.I]{daley-03}. Figure \ref{residuals} compares the quantiles of the interarrival times of the empirical residual process with the theoretical quantiles. With the exception of neurons 1 and 9, the model seems to fit the data reasonably well given the misspecification due to the refractory period. For neurons 1 and 9, the model cannot explain the strong rhythmic firing pattern visible in the data. This suggests that the rhythmic firing cannot be explained by a self-excitatory mechanism but is more likely caused by some external source. This might be captured by a time-varying baseline intensity with some seasonal pattern.

\begin{figure}\centering
\small
\newcommand{\myNode}[2]{\circlenode{#1}{\makebox[0.5ex]{#2}}}
\begin{pspicture}(0,0.2)(8.5,4.4)
  \psset{linewidth=0.6pt,arrowscale=1.5 2.0,arrowinset=0.2,dotsep=2pt}  
  \rput(0.5,1.5){\myNode{1}{$1$}}
  \rput(2.0,2.3){\myNode{2}{$2$}}
  \rput(3.5,1.5){\myNode{3}{$3$}}
  \rput(5.0,0.7){\myNode{4}{$4$}}
  \rput(5.0,2.3){\myNode{5}{$5$}}
  \rput(3.5,3.1){\myNode{6}{$6$}}
  \rput(5.0,3.9){\myNode{7}{$7$}}
  \rput(6.5,3.9){\myNode{8}{$8$}}
  \rput(6.5,1.5){\myNode{9}{$9$}}
  \rput(8.0,3.9){\myNode{10}{$10$}}
  \ncline{->}{1}{2}
  \ncline{->}{1}{4}
  \ncline{->}{2}{3}
  \ncline{->}{2}{6}
  \ncline{->}{3}{5}
  \ncline{->}{3}{4}
  \ncline{->}{6}{7}
  \ncline{->}{7}{8}
  \ncline{->}{8}{10}
\end{pspicture}
\caption{Granger causality graph obtained from the estimated link functions in Fig.~\ref{data} for the neural spike train data.}
\label{graph}
\end{figure}

As already mentioned, the application of the Hawkes model to neural data suffers from two drawbacks. Firstly the model is only capable of modeling excitatory connections but not inhibition, which is well known to play a major role in neuronal connectivity. Secondly neurons possess a refractory period during which the neuron cannot fire again. Although these findings contradict the conditions of the Hawkes model, the above application demonstrates that the nonparametric estimates still yield meaningful results, whereas incorporating refractory periods into the model would destroy its linear structure. We note that in many other applications such as modeling aftershock effects \citep{ogata-99,vere-70,vere-82}, insurgency in Iraq \citep{brantingham-11}, crime \citep{mohler-11} or genome analysis \citep{reynaud-10} the Hawkes model does not encounter these problems or only to a lesser extent \citep{kagan2004}.

\section{Concluding Remarks}
\label{section-conclusion}

In this paper we have investigated the structure of Hawkes models and proved that the Granger causality structure of the process is fully encoded in the corresponding kernels. Moreover we have defined a new nonparametric estimator of the Hawkes kernels based on a time-discretized version of the point process and an infinite order autoregression. The estimator is easy and fast to compute even for higher dimensions, which is of particular importance for the implementation of causal search algorithms that require fitting not only of the full model but also of many submodels. We note that the computation of the required covariances is linear in the length of the discretized process. This allows to choose a high resolution for the nonparametric estimator even for long observation periods. In particular, the fitting of models to subprocesses for causal learning only depends on the chosen order of the approximating model since the covariances need to be computed only once from the discretized process. 

Given the form of the conditional intensity, and in particular \eqref{approximateARequation}, the estimator is quite intuitive. However, on a second glance, it is surprising that the method really is consistent since in the limit the discretized time series consists mainly of zeros and some 1s. We have succeeded to establish consistency rigorously but failed up to now to prove asymptotic normality - although we are still convinced that asymptotic normality holds with a reasonably good rate. A closer inspection of the problems reveals that the structure of the discretized time series is quite different from usual infinite order AR-processes in that the innovation are (approximately) a heteroscedastic martingale difference sequence leading to severe technical problems. Furthermore, some terms in the calculations are of higher order as in the AR-case and do not disappear. For this reason we have postponed the proof of asymptotic normality to future work.

\begin{appendix}
\section{Proofs}

\begin{proof}[Proof of Theorem \ref{globalMP}]
We consider stationary multivariate point processes on $\rnum$ while the proof of Didelez only covers processes on $[0,T]$ (or any compact interval). Therefore, strictly speaking, the result must be extended.

The log-likelihood of the process $N$ on $[t_0,t]$ is given by
\[
\lsum_{i=1}^{d}\bigg[\int_{t_0}^{t}\log\lambda_i(t)\,dN_i(t)
-\int_{t_0}^{t}\lambda_i(t)\,dt\bigg]
\]
where $\lambda_i(t)$ is given by \eqref{Hawkes-intensity} and only depends on
$N_{\closure{i}}(s)$, $s\leq t$, where $\closure{A}=A\cup\parent{A}$. It follows that the likelihood can be factorized into factors that are $\fclass_{C}(t)$-measurable for 
sets $C\in\mathcal{C}=\{\closure{i}|i\in V\}$. The sets in $\mathcal{C}$ are complete in the moral graph ${G\moral}$; combining factors with sets in the same clique we obtain a factorization over the cliques of the moral graph. The factorization prevails if we let $t_0$ tend to $-\infty$. This implies the global Markov property with respect to the moral graph.

Finally we note that every path from $A$ to $B$ is blocked by $B\cup C$ if and only if
\[
\parent{B}\without(B\cup C)\sep A\given B\cup C
\]
in the moral graph $(G_\ancestor{A\cup B\cup C})\moral$. This implies by the global Markov property that
\[
\fclass_{\parent{B}}(t)\indep\fclass_{A}(t)\given\fclass_{B\cup C}(t).
\]
Finally we have for every $b\in B$
\begin{align*}
\lambda_{b}^{A\cup B\cup C}(t)
&=\mean\big(\lambda_{b}^{V}(t)\big|\fclass_{A\cup B\cup C}(t)\big)
=\mean\big(\lambda_{b}^{\parent{b}}(t)\big|\fclass_{A\cup B\cup C}(t)\big)\\
&=\mean\big(\lambda_{b}^{\parent{b}}(t)\big|\fclass_{B\cup C}(t)\big)
=\mean\big(\lambda_{b}^{V}(t)\big|\fclass_{B\cup C}(t)\big)
=\lambda_{b}^{B\cup C}(t).
\end{align*}
Hence $N_A$ does not Granger cause $N_B$ with respect to the subprocess $N_{A\cup B\cup C}$.
\end{proof}

For the proof of Theorem \ref{convergence}, we need the following two technical lemmas.

\begin{lemma}
\label{tildeapprox}
Under the assumptions of Theorem \ref{convergence} we have
\begin{romanlist}
\item
$\DS\Bignorm{\tfrac{1}{\Thk}\lsum_{t=k+1}^{\Th}\big(\Y_{t-u}-p_N\,h\big)}^2_2
=o_P\big(\tfrac{h^2}{T})$;
\item
$\DS\norm{\hatGk-\tildeGk}^2_2=O_P\big(\tfrac{k^2\,h^4}{T^2}\big)$
and 
$\DS\norm{\hatgk-\tildegk}^2_2=O_P\big(\tfrac{k\,h^4}{T^2}\big)$
\end{romanlist}
where $\tildeGk$ and $\tildegk$ are defined with $\barY$ and $\barYk$ substituted by their mean $p_N\,h$.  
\end{lemma}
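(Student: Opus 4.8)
The plan is to reduce both parts to a single second-moment estimate for the centred block averages $\tfrac{1}{\Thk}\lsum_{t=k+1}^{\Th}(\Y_{t-u}-p_N\,h)$; note first that $\mean(\Y_{t})=\mean\big(N(h)\big)=p_N\,h$ by stationarity, so these averages are genuinely centred for every $u$. For part (i) I would evaluate the expected squared Euclidean norm through the covariance function $\Gh$ of $\Y$,
\[
\mean\Bignorm{\tfrac{1}{\Thk}\lsum_{t=k+1}^{\Th}(\Y_{t-u}-p_N\,h)}^2_2
=\tfrac{1}{\Thk^{2}}\lsum_{s,t=k+1}^{\Th}\trace\Gh(s-t)
\le\tfrac{C}{\Thk}\lsum_{j\in\znum}\norm{\Gh(j)}_2 ,
\]
so that the crux is to show $\lsum_{j}\norm{\Gh(j)}_2=O(h)$. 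I would isolate the lag-zero term: orderliness of $N$ gives $\mean(\Y_{i,t}^{2})=\mean(\Y_{i,t})+o(h)=p_{N,i}\,h+o(h)$, and together with $\int_A\int_A\norm{q_{NN}(s-s')}\,ds'\,ds=o(h)$ on the cell $A=((t-1)h,th]$ this yields $\norm{\Gh(0)}_2=O(h)$. For $j\ne 0$ the two relevant cells are disjoint, so $\Gh(j)$ is a pure double integral of the covariance density $q_{NN}$, and summing over $j\ge 1$ tiles a half-line, which by Fubini gives $\lsum_{j\ge 1}\norm{\Gh(j)}_2\le C\,h\int_{0}^{\infty}\norm{q_{NN}(v)}_2\,dv=O(h)$, and symmetrically for $j\le-1$; absolute integrability of $q_{NN}$ is exactly what Assumption \ref{A2} secures for a stationary Hawkes process (cf.\ the discussion after Theorem \ref{convergence}). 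Since $\Thk=\TN/h-k\sim\TN/h$ under Assumption \ref{A1}, this gives $\mean\norm{\cdot}^2_2=O(h^{2}/\TN)$, and Markov's inequality then yields the stochastic bound of part (i).

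Part (ii) follows formally. Writing $\tildeGk=\tfrac{1}{\Thk}\lsum_{t}\big(\Yk_t-\mean(\Yk_t)\big)\big(\Yk_t-\mean(\Yk_t)\big)'$ and $\tildegk=\tfrac{1}{\Thk}\lsum_{t}\big(\Y_t-\mean(\Y_t)\big)\big(\Yk_t-\mean(\Yk_t)\big)'$, the elementary identity relating a sample covariance to its known-mean analogue gives
\[
\hatGk-\tildeGk=-\big(\barYk-\mean(\Yk_t)\big)\big(\barYk-\mean(\Yk_t)\big)',\qquad
\hatgk-\tildegk=-\big(\barY-\mean(\Y_t)\big)\big(\barYk-\mean(\Yk_t)\big)'.
\]
Since the $v$-th block of $\barYk-\mean(\Yk_t)$ is precisely $\tfrac{1}{\Thk}\lsum_{t}(\Y_{t-v}-p_N\,h)$, applying part (i) to each of the $k$ blocks gives $\norm{\barYk-\mean(\Yk_t)}^2_2=O_P(k\,h^2/\TN)$, while $\norm{\barY-\mean(\Y_t)}^2_2=O_P(h^2/\TN)$. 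Using $\norm{uv'}_2=\norm{u}_2\,\norm{v}_2$ for the rank-one outer products, the first identity gives $\norm{\hatGk-\tildeGk}^2_2=\norm{\barYk-\mean(\Yk_t)}^4_2=O_P(k^2h^4/\TN^2)$ and the second gives $\norm{\hatgk-\tildegk}^2_2=\norm{\barY-\mean(\Y_t)}^2_2\,\norm{\barYk-\mean(\Yk_t)}^2_2=O_P(k\,h^4/\TN^2)$, as claimed.

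The only step carrying real content is the estimate $\lsum_j\norm{\Gh(j)}_2=O(h)$, and this is where I expect the main obstacle: one must show that the summed autocovariances of the discretized process are of order $h$ and not of order one, extracting the extra factor $h$ from the ``thinness'' of $N$ at scale $h$ --- from the identity $\mean(dN^2)=\mean(dN)$ for the lag-zero term, and from the fact that the nonzero-lag covariances are cell integrals of $q_{NN}$ that sum to $h\int_0^\infty\norm{q_{NN}}_2$. Verifying that these two contributions genuinely combine to order $h$, rather than being spoilt by a possible singularity of $q_{NN}$ at the origin (which is precisely where the Hawkes structure is invoked), is the point that needs care; the remainder is bookkeeping.
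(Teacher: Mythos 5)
Your proposal is correct and follows essentially the same route as the paper: part (i) is the same second-moment computation (the paper writes the block sum as $\int_{kh}^{T}d\tilde N(t-hu)$ and integrates $q_{NN}+p_N\,\delta$ directly, which is exactly your lag-zero atom plus summed-cell-integral bound $\lsum_j\norm{\Gh(j)}_2=O(h)$ using integrability of $q_{NN}$), and part (ii) is the same rank-one identity between the sample-mean and known-mean covariance estimators combined with (i). No substantive difference or gap.
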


\begin{proof}
For (i), we note that
\[
\lsum_{t=k+1}^{\Th}\big(\Y_{t-u}-p_N\,h\big)=\int_{kh}^{T}d\tilde N(t-hu).
\]
This implies that
\begin{align*}
\mean\Bignorm{\SSS{\frac{1}{\Thk}}&\lsum_{t=k+1}^{\Th}
\big(\Y_{t-u}-p_N\,h\big)}^2_2\\
&=\SSS{\frac{1}{\Thk^2}}\int_{hk}^{T}\int_{hk}^{T}
\big(q_{NN}(t-s)+p_N\,\delta(t-s)\big)\,dt\,ds=O\Big(\SSS{\frac{h}{\Thk}}\Big).
\end{align*}
For the first part of (ii), we note that
\[
\norm{\hatGk-\tildeGk}_2
=(\barYk-(p_N\,h)\otimes I_d)'(\barYk-(p_N\,h)\otimes I_d)
\]
and hence
\[\mean\norm{\hatGk-\tildeGk}_2
=\lsum_{u=1}^{k}\mean\Bignorm{\SSS{\frac{1}{\Thk}}
\lsum_{t=k+1}^{\Th}\big(\Y_{t-u}-p_N\,h\big)}^2_2
=O\Big(k\,\SSS{\tfrac{h}{\Thk}}\Big)
\]
by (i). The second part of (ii) follows similarly.
\end{proof}

\begin{lemma}
\label{cov-orders}
Under the assumptions of Theorem \ref{convergence} we have
\begin{romanlist}
\item
$\DS\norm{\Gk}=O(h)$;
\item
$\DS\norm{\Gk^{-1}}=O\big(h^{-1}\big)$;
\item
$\DS\bignorm{\hatGk-\Gk}^2_2=O_p\big(\tfrac{k^2\,h^2}{T}\big)$;
\item
$\DS\bignorm{\hatGk^{-1}-\Gk^{-1}}^2_2=O_p\big(\tfrac{k^2}{h^2\,T}\big)$;
\item
$\DS\bignorm{\hatGk^{-1}}=O_p\big(h^{-1}\big)$.
\end{romanlist}
\end{lemma}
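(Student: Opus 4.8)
My plan is to establish the five bounds in the order (i), (ii), (iii), (v), (iv): the two deterministic bounds on the population matrix $\Gk$ come first; (iii) is a variance computation; and (v), which needs (ii) and (iii), must precede the perturbation argument for (iv).

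For (i), $\Gk=(\Gh(u-v))_{u,v=1,\ldots,k}$ is block Toeplitz, so Young's inequality for $\ell^2$-convolutions gives $\norm{\Gk}\le\sum_{j}\norm{\Gh(j)}$, and exactly as in the computation in the proof of Lemma \ref{tildeapprox} one has $\sum_j\norm{\Gh(j)}\le h\int_{\rnum}\norm{q_{NN}(v)}\,dv+\norm{p_N}\,h=O(h)$ since the covariance density $q_{NN}$ of the Hawkes process is bounded and absolutely integrable. For (ii) I would bound $\norm{\Gk^{-1}}$ \emph{uniformly in $k$} via the spectral density $f^h$ of $\Y$: since $\Gk$ is block Toeplitz, the Grenander--Szeg\H{o}-type eigenvalue bounds give $\norm{\Gk^{-1}}\le C\sup_\omega\norm{f^h(\omega)^{-1}}$, and the canonical factorization $f^h(\omega)=\Psi(e^{-i\omega})\,\Sigma^h\,\Psi(e^{-i\omega})^*$ (with $\Psi(0)=I$ and $\Sigma^h$ the one-step linear prediction error covariance) yields $\norm{f^h(\omega)^{-1}}\le\norm{(\Sigma^h)^{-1}}\,\norm{\Psi(e^{-i\omega})^{-1}}^2$. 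Here (a) $\norm{(\Sigma^h)^{-1}}=O(h^{-1})$ because $\Sigma^h$ dominates $\mean[\var(\Y_t\mid\F_{h(t-1)})]$, which equals $h\,\mathrm{diag}(p_N)+o(h)$ by orderliness with $p_{N,i}\ge\nu_i>0$, so $\Sigma^h-c\,h\,I_d$ is positive semidefinite for small $h$; and (b) $\norm{\Psi(e^{-i\omega})^{-1}}=\norm{\Pi(e^{-i\omega})}=O(1)$ uniformly in $\omega$, where $\Pi$ is the autoregressive transfer function, because by \eqref{approximateARequation} its coefficients are $h\,\g(jh)$ up to an $o(h)$ remainder and $h\sum_{j\ge1}\norm{\g(jh)}\to\int_0^\infty\norm{\g(u)}\,du<\infty$. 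Step (b) is the main obstacle: \eqref{approximateARequation} is only an approximation---the exact conditional mean of $\Y_t$ is a nonlinear functional of the past event times rather than a linear function of the block counts $\Y_{t-1},\Y_{t-2},\ldots$---so the $o(h)$ remainder has to be controlled in a way that stays summable and uniform over the truncation level $k$.

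For (iii), Lemma \ref{tildeapprox}(ii) gives $\norm{\hatGk-\tildeGk}_2^2=O_P(k^2h^4/T^2)$, which is negligible relative to $k^2h^2/T$, so it suffices to show $\mean\norm{\tildeGk-\Gk}_2^2=\sum_{u,v=1}^k\sum_{i,l}\var\bigl((\tildeGk)_{uv,il}\bigr)=O(k^2h^2/T)$. Writing each entry as the time average $\tfrac{1}{\Thk}\sum_t(\Y_{i,t-u}-p_{N,i}h)(\Y_{l,t-v}-p_{N,l}h)$ and expanding its variance into second- and fourth-order cumulants via \eqref{cumulants}, the products of covariance densities contribute $\tfrac{1}{\Thk}\sum_\tau\norm{\Gh(\tau)}^2$-type sums that are $O(h^3/T)$ uniformly in $u,v$ (using $\sup_\tau\norm{\Gh(\tau)}=O(h)$ and $\sum_\tau\norm{\Gh(\tau)}=O(h)$), while the fourth-cumulant terms, after passing through \eqref{cumulants} and using orderliness together with integrability of the cumulant densities of the Hawkes process, are of order $O(h^3/T)$ off the diagonal and $O(h^2/T)$ on each of the $k$ diagonal blocks $u=v$. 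Summing yields $\mean\norm{\tildeGk-\Gk}_2^2=O(k^2h^3/T+kh^2/T)=O(k^2h^2/T)$, so (iii) follows by Markov's inequality; this step is tedious but routine.

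Finally, (v) and (iv) are perturbation estimates. From (iii), $\norm{\hatGk-\Gk}\le\norm{\hatGk-\Gk}_2=O_P(kh/\sqrt T)=o_P(h)$ since $k^2/T\to0$; combined with $\lambda_{\min}(\Gk)\ge c\,h$ from (ii), Weyl's inequality gives $\lambda_{\min}(\hatGk)\ge c\,h-o_P(h)\ge\tfrac{1}{2}c\,h$ with probability tending to one, so $\norm{\hatGk^{-1}}\le2/(c\,h)=O_P(h^{-1})$, which is (v). The resolvent identity $\hatGk^{-1}-\Gk^{-1}=\hatGk^{-1}(\Gk-\hatGk)\Gk^{-1}$ together with $\norm{AB}_2\le\norm{A}\norm{B}_2$ then gives $\norm{\hatGk^{-1}-\Gk^{-1}}_2\le\norm{\hatGk^{-1}}\,\norm{\hatGk-\Gk}_2\,\norm{\Gk^{-1}}=O_P(h^{-1})\,O_P(kh/\sqrt T)\,O(h^{-1})=O_P\bigl(k/(h\sqrt T)\bigr)$, and squaring gives (iv).
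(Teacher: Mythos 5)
Your treatment of (i), (iii), (iv) and (v) is sound and close to the paper's: (i) via summability of the block autocovariances is the same bound as the paper's $\sup_\omega\norm{f_{\Y\Y}(\omega)}=O(h)$ up to a constant; (iii) is the identical triangle-inequality-plus-cumulant computation, leaning on Lemma \ref{tildeapprox} exactly as the paper does; and your order (v)-before-(iv), using Weyl's inequality to get $\lambda_{\min}(\hatGk)\geq c\,h/2$ and then the resolvent identity, is a clean equivalent of the paper's route, which instead obtains (iv) directly from (ii)--(iii) by the Berk-type rearrangement $\big(1-\norm{\Gk^{-1}}\,\norm{\hatGk-\Gk}\big)\norm{\hatGk^{-1}-\Gk^{-1}}_2\leq\norm{\Gk^{-1}}^2\norm{\hatGk-\Gk}_2$ and then gets (v) by the triangle inequality.

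The genuine gap is (ii), and you have flagged it yourself without closing it. Your canonical-factorization route $f^h(\omega)=\Psi\,\Sigma^h\,\Psi^*$ needs $\norm{\Psi(e^{-\im\omega})^{-1}}=O(1)$ uniformly in $\omega$ and $h$, i.e.\ that the binned counts admit an AR($\infty$) representation with uniformly summable coefficients close to $h\,\g(jh)$. But $\Y$ is not a linear process, and \eqref{approximateARequation} only controls the conditional mean pointwise to order $o(h)$; upgrading that to a summable, $\omega$- and $k$-uniform bound on the inverse transfer function is essentially as hard as the eigenvalue bound you are trying to prove, and nothing in your sketch supplies it (your step (a), $\Sigma^h\gtrsim h\,\mathrm{diag}(p_N)$, is necessary but by itself does not bound $f^h(\omega)^{-1}$). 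The paper avoids any autoregressive representation: it computes the spectrum of the bin counts by aliasing, $f_{\Y\Y}(\omega)=h\sum_{u\in\znum}H_1(-\omega-u)\,\hat q_{NN}\big((\omega+u)/h\big)\,H_1(\omega+u)+\tfrac{h}{2\pi}\,p_N$, and notes that the last term---which stems from the simple-point-process identity $\mean(dN(t)^2)=\mean(dN(t))$, i.e.\ the atom $p_N\,h\,\delta(u)$ in the covariance of the counts---already gives the flat floor $f_{\Y\Y}(\omega)\geq\tfrac{h}{2\pi}\min\{p_{N,1},\ldots,p_{N,d}\}\,I_d$, the aliased term being nonnegative definite. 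Hence $\norm{\Gk^{-1}}\leq C\,h^{-1}$ uniformly in $k$, with no factorization needed. Without this (or a completed proof of your step (b)), your (ii) is unestablished, and since your (iv) and (v) both invoke it, you should replace the factorization argument by this direct spectral lower bound.
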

\begin{proof}
For the first two assertions, we note that $\Y$ is a discretized version of a linear transform of the stationary point process $N$. Thus we obtain for the covariance function of $\Y$
\begin{align*}
\cov(\Y_{u+1},\Y_{1})
&=\int_0^h\int_0^h\big(q_{NN}(uh+t-s)+p_N\,\delta(uh+t-s)\big)\,dt\,ds\\
&=\SSS{\frac{1}{2\pi}}\int_\rnum\int_0^h\int_0^h
\hat q_{NN}(\omega)\,e^{\im\omega(t-s+uh)}\,d\omega\,dt\,ds
+p_N\,h\,\delta(u)\\
\intertext{and further with $H_h(\omega)=\int_0^h I_d\,e^{-\im\omega t}\,dt$}
&=\SSS{\frac{1}{2\pi}}\int_\rnum
H_h(-\omega)\,\hat q_{NN}(\omega)\,H_h(\omega)\,e^{\im\omega\,h\,u}\,d\omega
+p_N\,h\,\delta(u)\\
&=\SSS{\frac{1}{2\pi\,h}}\int_\rnum H_h\big(-\tfrac{\omega}{h}\big)\,\hat q_{NN}\big(\tfrac{\omega}{h}\big)\,H_h(\big(\tfrac{\omega}{h}\big)
\,e^{\im\omega\,u}\,d\omega
+p_N\,h\,\delta(u)\\
&=\SSS{\frac{h}{2\pi}}\int_{-\pi}^{\pi}\Big[\lsum_{u\in\znum} H_1(-\omega-u)\,\hat q_{NN}\Big(\SSS{\frac{\omega+u}{h}}\Big)\,H_1(\omega+u)
+\SSS{\frac{1}{2\pi}}\,p_N\Big]\,e^{\im\omega\,u}\,d\omega,
\end{align*}
where we have used that $H_h\big(\tfrac{\omega}{h}\big)=h\,H_1(\omega)$. 
which implies
\[
f_{\Y\Y}(\omega)=h\,\lsum_{u\in\znum} H_1(-\omega-u)\,\hat q_{NN}\Big(\SSS{\frac{\omega+u}{h}}\Big)\,H_1(\omega+u)
+\SSS{\frac{h}{2\pi}}\,p_N.
\]
Since each summand in the first term is positive definite, we find that
\[
f_{\Y\Y}(\omega)\geq \SSS{\frac{h}{2\pi}}\,\min\{p_{N,1},\ldots,p_{N,d}\}\,I_d
\]
for all $\omega\in[-\pi,\pi]$ and all $h>0$. This proves that 
\[
\bignorm{\Gk^{-1}}=O\big(h^{-1}\big)
\]
for $h\to 0$ (the bound does not depend on $k$). Furthermore, under the assumptions on the link function the spectrum of $N$,
\[
f_{NN}(\omega)=(I_d-\G(\omega))^{-1}\,D_N\,(I_d-\G(-\omega)')^{-1}
\]
and hence $\hat q_{NN}(\omega)=f_{NN}(\omega)-\tfrac{1}{2\pi}\,D_N$ is uniformly bounded for all $\omega\in\rnum$. Since $|H_1(\omega+u)|^2$ satisfies
\[
\lsum_{u\in\znum}|H_1(\omega+u)|^2\leq C
\]
for all $\omega\in[-\pi,\pi]$, we get $\norm{\Gk}_2\leq\norm{f_{\Y\Y}(\omega)}\leq C\,h$ for some constant $C>0$.

For the third assertion, we first use the triangle inequality to get
\[
\bignorm{\hatGk-\Gk}_2\leq\bignorm{\hatGk-\tildeGk}_2+\bignorm{\tildeGk-\Gk}_2.
\]
Since by Lemma \ref{tildeapprox} the first term has the required order, it suffices to prove the assertion for the second term. We note
that $\mean(\tildeGk)=\Gk$ and $\mean(\tildeY_t)=0$. Thus we obtain by the product formula for cumulants
\begin{align*}
\mean\big(\norm{\tildeGk-\Gk}^2_2\big)
&=\tfrac{1}{\Thk^2}\lsum_{i_1,\ldots,i_4=1}^{d}\lsum_{u_1,\ldots,u_4=1}^{k}
\lsum_{t,s=k+1}^{\Th}
\cum\big(\tildeY_{i_1,t-u_1}\tildeY_{i_2,t-u_2},\tildeY_{i_3,s-u_3}\tildeY_{i_4,s-u_4}\big)\\
&=\tfrac{1}{\Thk^2}\lsum_{i_1,\ldots,i_4=1}^{d}\lsum_{u_1,\ldots,u_4=1}^{k}
\lsum_{t,s=k+1}^{\Th}
\Big[\cum\big(\tildeY_{i_1,t-u_1},\tildeY_{i_2,t-u_2},\tildeY_{i_3,s-u_3},\tildeY_{i_4,s-u_4}\big)\\
&\qquad\qquad+\cum\big(\tildeY_{i_1,t-u_1},\tildeY_{i_3,s-u_3}\big)
\cum\big(\tildeY_{i_2,t-u_2},\tildeY_{i_4,s-u_4}\big)\\
&\qquad\qquad+\cum\big(\tildeY_{i_1,t-u_1},\tildeY_{i_4,s-u_4}\big)
\cum\big(\tildeY_{i_2,t-u_2},\tildeY_{i_3,s-u_3}\big)\Big].
\end{align*}
By \eqref{cumulants} we find that all cumulants summed over $t$ are at most of order 
$O(h)$ which yields the required order.

The fourth result can be derived from (iii) similarly as in the proof of Lemma 3 of \citet{berk-74} by noting that
\begin{align*}
\bignorm{\hatGk^{-1}-\Gk^{-1}}_2
&=\bignorm{\hatGk^{-1}\big(\hatGk-\Gk\big)\Gk^{-1}}_2\\
&\leq\Big(\bignorm{\hatGk^{-1}-\Gk^{-1}}+\bignorm{\Gk^{-1}}\Big)\,%
\bignorm{\Gk^{-1}}\,\bignorm{\hatGk-\Gk}_2.
\end{align*}
Rewriting this as
\[
\big(1-\bignorm{\Gk^{-1}}\,\bignorm{\hatGk-\Gk}\big)\,
\bignorm{\hatGk^{-1}-\Gk^{-1}}_2
\leq\bignorm{\Gk^{-1}}^2\,\bignorm{\hatGk-\Gk}_2,
\]
we obtain the required convergence since 
by (ii) and (iii) $\bignorm{\Gk^{-1}}\,\bignorm{\hatGk-\Gk}
=O_P\big(\sqrt{\tfrac{k^2}{T}}\big)=o_P(1)$ and $\bignorm{\Gk^{-1}}^2\,\bignorm{\hatGk-\Gk}
=O_P\big(\sqrt{\tfrac{k^2}{h^2\,T}}\big)$.

For (v), we finally note that
\[
\bignorm{\hatGk^{-1}}
\leq\bignorm{\Gk^{-1}}+\bignorm{\hatGk^{-1}-\Gk^{-1}}=O_p\big(h^{-1}\big)
\]
by (ii) and (iv).
\end{proof}

In the following, $A\otimes B$ will denote the Kronecker product of matrices $A$ and $B$ with suitable dimensions.

\begin{proof}[Proof of Theorem \ref{convergence}]
First of all we note that assumption \ref{A2} assures the stationarity of the process $N$ and hence of the processes $\Y$ for all $h>0$. For notational convenience, let $d\tilde N(t)=dN(t)-p_N\,dt$. We start by rewriting 
\begin{align*}
\hatghk-\ghk
&=\big(\hatgk-\ghk\,\hatGk\big){\hatGk}^{-1}\\
&=\big(\tildegk-\ghk\,\tildeGk\big){\hatGk}^{-1}
+\big(\hatgk-\tildegk){\hatGk}^{-1}+\ghk(\tildeGk-\hatGk){\hatGk}^{-1}.
\end{align*}
By Lemmas \ref{tildeapprox} and \ref{cov-orders} the last two terms are in Euclidean norm at most of order $O_P(kh/T)$ and thus converge to zero in probability. For the first term, we get
\[
\big(\hatgk-\ghk\,\hatGk\big){\hatGk}^{-1}=\tfrac{1}{\Thk}\lsum_{t=k+1}^{\Th}\veps^h_t\,(\tildeYk_{t})'\,{\hatGk}^{-1},
\]
where $\veps^{h,k}_t=\tildeY_t-\ghk\,\tildeYk_t$. This yields the upper bound
\begin{equation}
\label{U-terms}
\bignorm{\hatghk-\ghk}_2
\leq\bignorm{\hatGk^{-1}}_2
\big(\norm{U_{1,T}}+\norm{U_{2,T}}+\norm{U_{3,T}}\big)+o_P(1)
\end{equation}
with 
\begin{align*} 
U_{1,T} &=\SSS{\frac{1}{\Thk}}\lsum_{t=k+1}^{\Th}
\big[\tildeY_{t}-\mean\big(\tildeY_t|\F_{t\,h-h}\big)\big]\,(\tildeYk_{t})',\\
U_{2,T} &=\SSS{\frac{h}{\Thk}}\lsum_{t=k+1}^{\Th}
\Big(\int_0^{hk}\g(u)\,d\tilde N(t\,h-u)-\lsum_{u=1}^k \g(h\,u)\tildeY_{t-u}\Big)\,(\tildeYk_{t})',\\
U_{3,T} &=\SSS{\frac{h}{\Thk}}\lsum_{t=k+1}^{\Th}
\Big(\int_{hk}^\infty\g(u)\,d\tilde N(t\,h-u)\Big)\,(\tildeYk_{t})',
\end{align*}							 
where we have used that $p_N=\nu+\int \g(u)\,du\,p_N$. We proceed in showing that the three terms $U_{1,T}$, $U_{2,T}$ and $U_{3,T}$ are of order $o_P(h)$. Together with Lemma \ref{cov-orders} (ii) this proves $\norm{\hatghk-\ghk}_2=o_P(1)$. 

Starting with the first term $U_{1,T}$,  we find
\begin{align*}
\mean\norm{U_{1,T}}^2
&=\SSS{\frac{1}{\Thk^2}}\mean\Bignorm{\lsum_{t=k+1}^{\Th}
\big(\tildeY_t-\mean(\tildeY_t\given\F_{h(t-1)})\big)\,(\tildeYk_{t})'}^2\\ 
&=\SSS{\frac{1}{\Thk^2}}\sum_{t,s=k+1}^{\Th}\mean\Big[
\big(\tildeY_s-\mean(\tildeY_s\given\F_{h(s-1)})\big)'
\big(\tildeY_t-\mean(\tildeY_t\given\F_{h(t-1)})\big)
\,(\tildeYk_{t})'\,{\tildeYk_{s}}\Big]\\
\intertext{and further by the martingale property}
&=\SSS{\frac{1}{\Thk^2}}\sum_{t=k+1}^{\Th}
\mean\Big[\bignorm{\tildeY_t-\mean(\tildeY_t\given\F_{h(t-1)})}_2^2
\bignorm{\tildeYk_{t}}^2_2\Big]\leq\SSS{\frac{C\,h}{\Thk}}.
\end{align*}
Thus $h^{-1}\,\norm{U_{1,T}}$ converges to zero  in probability according to assumption (i).
	
For the second term in \eqref{U-terms} we obtain
\begin{align*}  
\mean\norm{U_{2,T}}
&\leq\SSS{\frac{h}{\Thk}}\lsum_{t=k+1}^{\Th}
\Big(\mean\Bignorm{\lsum_{u=1}^k\int_0^h\big(\g(hu+\alpha)-\g(hu)\big)
\,d\tilde N(t(h-u)-\alpha)}_2^2
\,\mean\bignorm{\tildeYk_t}_2^2\Big)^{\tfrac{1}{2}}.
\end{align*}
By Lemma \ref{cov-orders} we obtain for the second mean
$\mean\bignorm{\tildeYk_t}_2^2=\trace{\Gk}=O(h\,k)$. For the first mean, we have
\begin{align*}
\mean\Bignorm{\lsum_{u=1}^k&\int_0^h\big(\g(hu+\alpha)-\g(hu)\big)
\,d\tilde N(t(h-u)-\alpha)}_2^2\\
&=\lsum_{u,v=1}^{k}\int_0^h\int_0^h
\trace\big[\big(\g(hu+\alpha)-\g(hu)\big)\,q_{NN}(h(u-v)+\alpha-\beta)\\
&\qquad\qquad \times\big(\g(hv+\beta)-\g(hv)\big)'\big]\,d\alpha\,d\beta\\
&\qquad\qquad +\lsum_{u=1}^{k}\int_0^h
\trace\big[\big(\g(hu+\alpha)-\g(hu)\big)\,D_{N}\,\big(\g(hu+\alpha)-\g(hu)\big)'\big]\,d\alpha\,d\beta\\
&\leq C\,h^2\int_0^{hk}\int_0^{hk}\norm{q_{NN}(\alpha-\beta)}_1\,d\alpha\,d\beta
+C\,h^3\,k\,\norm{D_N}_1\leq C\,h^3\,k.
\end{align*}
Combining the results, we find $\mean\norm{U_{2,T}}_2=O(h^3\,k)=o(h)$.

For the last term in \eqref{U-terms} we obtain similarly as for $U_{2,T}$
\begin{align*}
\mean\Bignorm{\int_{hk}^\infty&\g(u)\,\big(dN(t\,h-u)-p_N\,du\big)}^2_2\\
&\leq\int_{kh}^{\infty}\int_{kh}^{\infty}\norm{\g(u)}\,\norm{\g(v)}
\,\norm{q_{NN}(u-v)}_2\,du\,dv
+\int_{kh}^{\infty}\norm{\g(u)}^2\,\norm{D_{N}}_2\,du=O\big(1/hk\big),
\end{align*}
where we have used assumption (iii). Together with $\mean\norm{\tildeYk_t}_2^2=O(h\,k)$ this yields $\mean\norm{U_{3,T}}_2=o(h)$.

Finally, we note for the consistency of $\hatnuh$ that
\begin{align*}
\hatnuh-\nuh
&=\SSS{\frac{1}{\Thk}}\lsum_{t=k+1}^{\Th}
\big(\Y_t-\nuh-\hatghk\Yk_t\big)\\
&=\SSS{\frac{1}{\Thk}}\lsum_{t=k+1}^{\Th}
\big(\tildeY_t-\mean(\tildeY_t\given\F_{h(t-1)})\big)
+\SSS{\frac{1}{\Thk}}\lsum_{t=k+1}^{\Th}
\big(\ghk-\hatghk)\tildeYk_t\\
&\qquad\qquad+\SSS{\frac{h}{\Thk}}\lsum_{t=k+1}^{\Th}
\lsum_{u=1}^{k}\int_0^h\big(\g(hu+\alpha)-\g(hu)\big)\,dN(t(h-u)-\alpha)\\
&\qquad\qquad+\SSS{\frac{h}{\Thk}}
\lsum_{t=k+1}^{\Th}\int_{hk}^{\infty}\g(u)\,dN(th-u)+o_P(h)
\end{align*}
Convergence to zero of all four terms follows by similar arguments as above.
\end{proof}

\end{appendix}
\bibliography{bib}
\bibliographystyle{stat}

\end{document}